\documentclass[final]{jotart}
\NeedsTeXFormat{LaTeX2e}[1995/12/01]
\RequirePackage{amsmath}
\RequirePackage{amsopn}
\RequirePackage{amsfonts}
\RequirePackage{paralist}
\RequirePackage{amssymb}
\RequirePackage{amsthm}
\usepackage{graphicx}

\usepackage{amsrefs}
%\usepackage[alphabetic]{amsrefs}
%\usepackage[author-year]{amsrefs} %% can also use the options
                                   %% shortalphabetic and y2k
\renewcommand\MR[1]{\relax} % Must follow amsrefs
\usepackage[all]{xy}
\usepackage{mathrsfs}
%\usepackage{d-bends} 

%
%-----------------------------------------------
%
\theoremstyle{proclaim}
\newtheorem{thm}{Theorem}[section]
\numberwithin{equation}{section}

\newtheorem{cor}[thm]{Corollary}
\newtheorem{lemma}[thm]{Lemma}
\newtheorem{prop}[thm]{Proposition}

\theoremstyle{statement}

\newtheorem{remark}[thm]{Remark}
\newtheorem{example}[thm]{Example}
\newtheorem{mycomment}[thm]{Comment}
{\end{mycomment}\endgroup}
%------------------------------------------------------
%       Take care of Mathematical hyphenation exceptions here
%-------------------------------------------------------
\hyphenation{pre-print}
%--------------------------------------------------------
%       Add standard mathematics macros in a form compatible
%       with AMSLaTeX.
%------------------------------------------------------

\def\mathcs{C^{*}}
\newcommand{\cs}{\ensuremath{\mathcs}}

\DeclareMathSymbol{\rtimes}{\mathbin}{AMSb}{"6F}
\newcommand{\ib}{im\-prim\-i\-tiv\-ity bi\-mod\-u\-le}

\newcommand{\sme}{\,\mathord{\mathop{\text{--}}\nolimits_{\relax}}\,}

\def\C{\mathbf{C}}
\def\T{\mathbf{T}}
\def\Z{\mathbf{Z}}
\def\K{\mathcal{K}}

\DeclareMathOperator{\Ad}{Ad}

\DeclareMathOperator{\Aut}{Aut}

\DeclareMathOperator{\id}{id}
\def\set#1{\{\,#1\,\}}
\newcommand\sset[1]{\{#1\}}
\let\tensor=\otimes

%
%------------------------------------------------------------
%   Use letters for lists and try to make sure they remain in the
%   correct font.
%----------------------------------------------------------
\makeatletter
\def\labelenumi{\textnormal{(\@alph\c@enumi)}}
\def\theenumi{\@alph \c@enumi}
\def\labelenumii{\textnormal{(\@roman\c@enumii)}}
\def\theenumii{\@roman \c@enumii}
\newcount\charno
\def\alphapart#1{\charno=96
\advance\charno by#1\char\charno}

\makeatother

%
%-----------------------------------------------------------
%    Inner products
%-----------------------------------------------------------
\def\<{\langle}
\def\>{\rangle}
\let\ipscriptstyle=\scriptscriptstyle
\def\lipsqueeze{{\mskip -3.0mu}}
\def\ripsqueeze{{\mskip -3.0mu}}
\def\ipcomma{\nobreak\mathrel{,}\nobreak}
\newbox\ipstrutbox
\setbox\ipstrutbox=\hbox{\vrule height8.5pt% depth 3.5pt
width 0pt}
\def\ipstrut{\copy\ipstrutbox}
\def\lip#1<#2,#3>{\mathopen{\relax_{\ipstrut\ipscriptstyle{
#1}}\lipsqueeze
\langle} #2\ipcomma #3 \rangle}
\def\blip#1<#2,#3>{\mathopen{\relax_{\ipstrut
\ipscriptstyle{ #1}}\lipsqueeze\bigl\langle} #2\ipcomma #3 \bigr\rangle}
\def\rip#1<#2,#3>{\langle #2\ipcomma #3
\rangle_{\ripsqueeze\ipstrut\ipscriptstyle{#1}}}
\def\brip#1<#2,#3>{\bigl\langle #2\ipcomma #3
\bigr\rangle_{\ripsqueeze\ipstrut\ipscriptstyle{#1}}}
\def\angsqueeze{\mskip -6mu}
\def\smangsqueeze{\mskip -3.7mu}
\def\trip#1<#2,#3>{\langle\smangsqueeze\langle #2\ipcomma #3
\rangle\smangsqueeze\rangle_{\ripsqueeze\ipstrut\ipscriptstyle{#1}}}
\def\btrip#1<#2,#3>{\bigl\langle\angsqueeze\bigl\langle #2\ipcomma
#3
\bigr\rangle
\angsqueeze\bigr\rangle_{\ripsqueeze\ipstrut\ipscriptstyle{#1}}}
\def\tlip#1<#2,#3>{\mathopen{\relax_{\ipstrut\ipscriptstyle{
#1}}\lipsqueeze \langle\smangsqueeze\langle} #2\ipcomma #3
\rangle\smangsqueeze\rangle}
\def\btlip#1<#2,#3>{\mathopen{\relax_{\ipstrut\ipscriptstyle{
#1}}\lipsqueeze
\bigl\langle\angsqueeze\bigl\langle} #2\ipcomma #3
\bigr\rangle\angsqueeze\bigr\rangle}

\def\ip(#1|#2){(#1\mid #2)}
\def\bip(#1|#2){\bigl(#1 \mid #2\bigr)}
\def\Bip(#1|#2){\Bigl( #1 \bigm| #2 \Bigr)}
%
%========================================================
%  AMSrefs tweaks
%%% Modified by dpw to include series thru address in InCollection
%%% cites.  They are omitted by default.
%%
%%  The \expandafter stuff just skips these alterations if \BibSpec is
%%  not defined -- for example if my colleagues are not using version
%%  2 of amsrefs.sty
%%
\expandafter\ifx\csname BibSpec\endcsname\relax\else
\BibSpec{collection.article}{%
    +{}  {\PrintAuthors}                {author}
    +{,} { \textit}                     {title}
    +{.} { }                            {part}
    +{:} { \textit}                     {subtitle}
    +{,} { \PrintContributions}         {contribution}
    +{,} { \PrintConference}            {conference}
    +{}  {\PrintBook}                   {book}
    +{,} { }                            {booktitle}
    +{,} { }                            {series}
    +{,} { \voltext}                    {volume}
    +{,} { }                            {publisher}
    +{,} { }                            {organization}
    +{,} { }                            {address}
    +{,} { \PrintDateB}                 {date}
    +{,} { pp.~}                        {pages}
    +{,} { }                            {status}
    +{,} { \PrintDOI}                   {doi}
    +{,} { available at \eprint}        {eprint}
    +{}  { \parenthesize}               {language}
    +{}  { \PrintTranslation}           {translation}
    +{;} { \PrintReprint}               {reprint}
    +{.} { }                            {note}
    +{.} {}                             {transition}
%    +{}  {\SentenceSpace \PrintReviews} {review}
}
%%% Modified to eliminate numbers (within volumes from bib entries
%%% copied from MathSciNet.  All I did was comment out the offending
%%% entry.
\BibSpec{article}{%
    +{}  {\PrintAuthors}                {author}
    +{,} { \textit}                     {title}
    +{.} { }                            {part}
    +{:} { \textit}                     {subtitle}
    +{,} { \PrintContributions}         {contribution}
    +{.} { \PrintPartials}              {partial}
    +{,} { }                            {journal}
    +{}  { \textbf}                     {volume}
    +{}  { \PrintDatePV}                {date}
%    +{,} { \issuetext}                  {number}
    +{,} { \eprintpages}                {pages}
    +{,} { }                            {status}
    +{,} { \PrintDOI}                   {doi}
    +{,} { available at \eprint}        {eprint}
    +{}  { \parenthesize}               {language}
    +{}  { \PrintTranslation}           {translation}
    +{;} { \PrintReprint}               {reprint}
    +{.} { }                            {note}
    +{.} {}                             {transition}
%    +{}  {\SentenceSpace \PrintReviews} {review}
}
\BibSpec{book}{%
    +{}  {\PrintPrimary}                {transition}
    +{,} { \textit}                     {title}
    +{.} { }                            {part}
    +{:} { \textit}                     {subtitle}
    +{,} { \PrintEdition}               {edition}
    +{}  { \PrintEditorsB}              {editor}
    +{,} { \PrintTranslatorsC}          {translator}
    +{,} { \PrintContributions}         {contribution}
    +{,} { }                            {series}
    +{,} { \voltext}                    {volume}
    +{,} { }                            {publisher}
    +{,} { }                            {organization}
    +{,} { }                            {address}
    +{,} { pp.~}                        {pages}
    +{,} { \PrintDateB}                 {date}
    +{,} { }                            {status}
    +{}  { \parenthesize}               {language}
    +{}  { \PrintTranslation}           {translation}
    +{;} { \PrintReprint}               {reprint}
    +{.} { }                            {note}
    +{.} {}                             {transition}
%    +{}  {\SentenceSpace \PrintReviews} {review}
}
\fi
%\%===========================================================
%
%   local macros: project specific macros go here.
%
%===========================================================
\newcommand\go{G^{(0)}} %% Unit spaces
\def\g[#1,#2]{{}_{G}[#1,#2]} %% Groupoid maps on equivalences
\def\h[#1,#2]{[#1,#2]_{H}}

\newcommand\B{\mathscr{B}}
\newcommand\A{\mathscr{A}}
\def\sa_#1(#2,#3){\Gamma_{#1}(#2;#3)}
\newcommand\X{\mathsf{H}}
\newcommand\XX{\mathscr{H}}
\newcommand\KK{\mathscr{K}}
\newcommand\E{\mathscr{E}}
\newcommand\CC{\mathscr{C}}

\newcommand\Iso{\operatorname{Iso}}
\renewcommand\H{\mathcal{H}}
\newcommand\U{\mathscr{U}}
\newcommand\uG{\underline{G}}
\newcommand\uGo{\uG^{(0)}}
\newcommand\uA{\underline{\A}}

\newcommand\uj{\underline{j}}
\newcommand\alphau{\underline{\alpha}}
\newcommand\uE{\underline{E}}

\newcommand\tE{\uE}
\newcommand\iotau{\underline{\iota}}
\newcommand\pr{\operatorname{pr}}
\newcommand\eop{E^{\text{o}}}
\newcommand\basis[1]{\mathcal{#1}}
\newcommand\W{\basis{W}}
\newcommand\ue{\underline{e}}
\newcommand\ccge{C_{c}(G;E)}
%------------------------------------------------------
%   Time of day for drafts
%------------------------------------------------------
\newcount\hours
\newcount\minutes       %   For computing the time of day on
\def\timeofday{%    Must be computed when called if preloaded
\hours=\time
\minutes=\hours
\divide\hours by60
\multiply\hours by60
\advance\minutes by-\hours
\divide\hours by60
\ifnum\hours>9\else0\fi\the\hours:\ifnum\minutes>9\else
0\fi\the\minutes}
\def\predate{\date{\the\day\ \ifcase\month\or
  January\or February\or March\or April\or May\or June\or July\or
        August\or September\or October\or November\or
           December\fi\ \the\year\ --- \timeofday\ --- Preliminary
                  Version}}
     
\usepackage[normalem]{ulem} % sout stuff1
\usepackage{color}
\definecolor{Dgreen}{cmyk}{0.93,0.33,0.92,0.25} %% Dartmouth Green!

%% Adjust the line length to be compatible with 12 point type.
%% Comment out when switching to 10 point type in final version.
% \setlength{\textwidth}{36pc}
% \setlength{\oddsidemargin}{3.25in}
% \addtolength{\oddsidemargin}{-18pc}
% \evensidemargin=\oddsidemargin
%%%%%%%%%%%%%%%%%%%%%%%%%%%%%%%%%%%%%%%
\begin{document}

\title[Groupoid Crossed Products]{Groupoid 
Crossed Products of Continuous-Trace \cs-Algebras}
\author{Erik van Erp}
\address{Department of Mathematics \\ Dartmouth College \\ Hanover, NH
03755\\ USA}
\email{erikvanerp@dartmouth.edu}
\author{Dana P. Williams} 
\address{Department of Mathematics \\ Dartmouth College \\ Hanover, NH
03755 \\ USA}
\email{dana.williams@dartmouth.edu}

% \thanks{The first author was partially supported by NSF grant
%   DMS-1100570, and the second author by a grant from the Simons
%   Foundation.}  

\date{3 September 2013}

\begin{abstract}
We show that if $(A,G,\alpha)$ is a groupoid dynamical system with $A$
continuous trace, then the crossed product $A\rtimes_{\alpha}G$ is
Morita equivalent to the \cs-algebra $\cs(\uG,\uE)$ of a twist $\uE$
over a groupoid $\uG$ equivalent to $G$.  This is a groupoid
analogue of the well known result for the crossed product of a group
acting on an elementary \cs-algebra.
\end{abstract}

\begin{subjclass}
  46L05, 46L55
\end{subjclass}

\begin{keywords}
  Groupoid \cs-algebras, continuous-trace \cs-algebras, crossed products
\end{keywords}
\maketitle

\section{Introduction}
\label{sec:introduction}

One of the basic results in the theory of crossed products of
\cs-algebras by groups is the result, due to Green
\cite{gre:am78}*{Theorem~18}, computing the crossed product
$A\rtimes_{\alpha}G$ when $A$ is elementary.  The primary object of
this note is to prove an analogue, up to Morita equivalence, of
Green's result for Groupoid crossed products.

For motivation, we recall some of the details of Green's result.  If
$(A,G,\alpha)$ is a dynamical system with $A=\K(\H)$ for a complex
Hilbert space $\H$, then there is a short exact sequence of locally
compact groups
\begin{equation}
  \label{eq:1}
  \xymatrix{1\ar[r]&\T\ar[r]^{i}&E\ar[r]^{j}&G\ar[r]&1}
\end{equation}
that arises as follows.  The unitary group $U(\H)$ acts on $A=\K(\H)$
by automorphisms, via $U\mapsto \Ad U$, and by Wigner's theorem every
automorphism of $A$ arises in this way.  The kernel of $U(\H)\to
\Aut\,A$ is the center $\T\cdot I_\H\cong \T$ of $U(\H)$.  The
algebraic isomorphism $U(\H)/\T\cong \Aut\,A$ is a homeomorhism if
$U(\H)$ is given the strong operator topology.  The action
$\alpha\colon G\to \mathrm{Aut}\,A$ gives rise to the sequence
\eqref{eq:1} via pullback,
\begin{equation*}
  \xymatrix{
    1\ar[r]&\T\ar[r]^{i}\ar[d]&E\ar[r]^{j}\ar[d]&G\ar[r]\ar[d]^\alpha&1\\
    1\ar[r]&\T\ar[r]&U(\H)\ar[r]^-{\Ad}&\mathrm{Aut}\,A\ar[r]&1.
  }
\end{equation*}
In other words, $E$ is the fibered product
\[ E:=\set{(s,U)\in G\times U(\H):\alpha_{s}=\Ad U},\] and $E$ is a
locally compact group if given the relative topology in $G\times
U(\H)$. (This is not trivial, because $U(\H)$ is not locally compact.
The construction is described in detail in \cite{wil:crossed}*{\S7.3
  \& \S D.3}.)

Since $i(\T)$ is central in $E$, every irreducible unitary
representation $\pi$ of $E$ has a single $\T$-type; i.e., for every
$\pi$ there is an integer $k\in \Z$ such that $\pi(i(z))=z^k
I_{\H_\pi}$ for $z\in \T$.  The unitary dual $\hat{E}$ decomposes as a
disjoint union of closed subsets according to $\T$-type.  The twisted
group $\cs$-algebra $\cs(G;E)$ is the quotient of $\cs(E)$
corresponding to $\T$-type $\pi(i(z))=\bar z I_{\H_{\pi}}$.

Green's result says that the crossed product $A\rtimes_{\alpha}G$ is
isomorphic to the tensor product $\cs(G;E)\tensor A$.  Since $A$ is
elementary, $A\rtimes_{\alpha}G$ is Morita equivalent to $\cs(G;E)$.

We want to exhibit the analogous result for groupoid crossed products
$\A\rtimes_{\alpha}G$ where $G$ is a second countable locally compact
Hausdorff groupoid and $\A$ is an upper semicontinuous \cs-bundle of
elementary \cs-algebras over the unit space $\go$.  Our techniques
require that the section algebra $A=\sa_{0}(\go,\A)$ be a separable
continuous-trace \cs-algebra.

The groupoid analogue of central extensions of $G$ are called either
\emph{$\T$-group\-oids} over $G$ or \emph{twists} over $G$.  A twist
over $G$ is a principal $\T$-bundle $j:E\to G$ where $E$ has a
groupoid structure making $j$ a groupoid homomorphism.  The associated
\cs-algebras $\cs(G;E)$ have been extensively studied
\cites{muhwil:jams04,muhwil:plms395,muhwil:ms92,kum:cjm86}.  In the
case where $A=\sa_{0}(\go,\A)$ has continuous trace with trivial
Dixmier-Douady class $\delta(A)$, our main result
(Theorem~\ref{thm-main}) says that there is a twist $E$ over $G$,
analogous to \eqref{eq:1}, such that $\A\rtimes_{\alpha}G$ is Morita
equivalent to $\cs(G;E)$.  If $\delta(A)$ is nontrivial, we must
replace $G$ by an equivalent groupoid $\uG$.  Then we show that
$\A\rtimes_{\alpha}G$ is Morita equivalent to $\cs(\uG;\uE)$ for an
appropriate twist $\uE$.

The converse also holds, and is much easier.  In
\S\ref{sec:partial-converse} we show that if $E$ is a twist over
$G$, there is a Hilbert $C_{0}(\go)$-module $\X$ and an action
$\alpha$ of $G$ on the generalized compacts $\K(\X)$ such that
$\cs(G;E)$ is Morita equivalent to $\K(\X)\rtimes_{\alpha}G$.

{\emergencystretch=10pt Our result is closely related to the work on the Brauer group in
\cite{kmrw:ajm98}.  The Brauer group $\mathrm{Br}(G)$ consists of
equivalence classes (appropriately defined) of $\cs$-dynamical systems
$(\A,G,\alpha)$ with a continuous trace \cs-algebra
$A=\sa_{0}(\go,\A)$.  The main result of \cite{kmrw:ajm98} is that
$\mathrm{Br}(G)$ is isomorphic to a group $\mathrm{Ext}(G,\T)$ whose
elements are pairs $(\uG,\uE)$ consisting of a groupoid $\uG$ that is
equivalent to $G$ and a twist $\uE$ of $\uG$, where the pairs
$(\uG,\uE)$ are subject to a subtle equivalence relation.\par}

The focus in \cite{kmrw:ajm98} is on establishing a group
isomorphism betweem $\mathrm{Br}(G)$ and $\mathrm{Ext}(G,\T)$, and the
difficulty resides in the precise equivalence relations that define
the two groups.  However, the relation between the (maximal or
reduced) \cs-algebras $\A\rtimes_\alpha G$ and $\cs(\uG;\uE)$ is not
considered there.

Our main tool here is the Equivalence Theorem for Fell bundles from
\cite{muhwil:dm08}. This has the advantage that explicit pre-\ib s for
our Morita equivalences can be read off from the formulas in
\cite{muhwil:dm08}.  More significantly, it follows from
\cite{simwil:nyjm13}*{Theorem~14} that our results pass to the reduced
algebras; that is, we also have a Morita equivalence of
$\A\rtimes_{\alpha,r} G$ and $\cs_r(\uG,\uE)$.  However, because we
use the Equivalence Theorem, all our results require separability,
which is not the case for Green's result.

In a different context, an equivalence class of groupoids represents a
{\em stack}, and a twist over a groupoid represents an $S^1$-{\em
  gerbe}.  The twisted $K$-theory of a stack twisted by a gerbe is, by
definition, the $K$-theory of the reduced $\cs$-algebra $\cs_r(G,E)$,
where $G$ is a groupoid representing the stack, and $E$ is a twist of
$G$ corresponding to the gerbe.  In \cite{txlg:acens12} Tu, Xu and
Laurent-Gengoux consider the twisted $K$-theory of {\em
  differentiable} stacks, i.e., the case where $G$ is a Lie groupoid
and $E$ is a smooth twist of $G$.  In the language of
\cite{txlg:acens12}, the present paper deals with (second countable)
{\em locally compact} stacks, and our results imply that, in that more
general context, the $K$-theory of a crossed product $\A\rtimes_\alpha
G$ is naturally isomorphic to the twisted $K$-theory of the stack
represented by $G$ for an appropriate gerbe, and vice versa.

\section{Preliminaries}
\label{sec:preliminaries}

For further references and results on upper semicontinuous \cs-algebra
bundles and upper semicontinuous Banach bundles we refer to
\cite{wil:crossed}*{Appendix C} and \cite{muhwil:dm08}*{Appendix A},
respectively; for groupoid crossed products, we refer to
\cite{muhwil:nyjm08}; and for Fell bundles and their associated
\cs-algebras, to \cite{muhwil:dm08}.

If $p:\B\to X$ is a Banach bundle, we write $B(x)$ for the Banach
space that is the fibre of $\B$ over $x$.  We write $\sa_{}(X,\B)$ for
the continuous sections of $\B$, and $\sa_{0}(X,\B)$ and
$\sa_{c}(X,\B)$ for the continuous sections vanishing at infinity or
with compact support, respectively.

We review the basic definitions for convenience.  In all that follows, $G$
will be a second countable locally compact Hausdorff groupoid with a
Haar system $\sset{\lambda^{u}}_{u\in\go}$.

\subsection{Groupoid Crossed Products}
\label{sec:group-cross-prod}
A groupoid dynamical system $(\A,G,\alpha)$ consists of an upper
semicontinuous \cs-bundle $p\colon \A\to \go$ with a continuous left
$G$-action
\[ \alpha\colon G\times_{s,p} \A:=\{(x,a)\in G\times \A\mid
s(x)=p(a)\} \to \A \] such that for every $x\in G$ the map
$\alpha_x(a):=\alpha(x,a)$ is an isomorphism of \cs-algebras
\[ \alpha_{x}:A(s(x))\to A(r(x)).\]
%%%%
%% Do we need this?
%%%
Then $\sa_{c}(G, r^{*}\A)$ is a $*$-algebra with respect to
\begin{equation*}
  f*g(x):= \int_{G}
  f(y)\alpha_{y}\bigl(g(y^{-1}x)\bigr)
  \,\lambda^{r(x)}(y) \quad\text{and}\quad
  f^{*}(x)=\alpha_{x}\bigl(f(x^{-1})^{*}\bigr) .
\end{equation*}
The crossed product $\A\rtimes_{\alpha}G$ is the completion of
$\sa_{c}(G,r^{*}\A)$ with respect to all suitably bounded
representations, and the reduced crossed product
$\A\rtimes_{\alpha,r}G$ is the completion of $\sa_{c}(G,r^{*}\A)$ with
respect to the regular representations.  (See \cite{muhwil:nyjm08}.)

\begin{remark}[Notation for Crossed Products]
  \label{rem-notation}
  A \cs-algebra $A$ can be given the structure of a $C_{0}(X)$-algebra
  if and only if there is an upper semicontinuous \cs-bundle $\A$ so
  that $A$ is $C_{0}(X)$-isomorphic to $\sa_{0}(X,\A)$
  \cite{wil:crossed}*{Theorem~C.26}.  If $(\A,\alpha,G)$ is a groupoid
  dynamical system and $A=\sa_{0}(\go,\A)$, then both
  $\A\rtimes_{\alpha}G$ and $A\rtimes_{\alpha}G$ are used to denote
  the crossed product.  We usually prefer the bundle notation
  $\A\rtimes_{\alpha}G$.
  % \evechange{\sout{, it is
  % not universally accepted.  In fact, as will be the case in
  % Section~\ref{sec:partial-converse}, it is sometimes convenient to
  % use the
  % notation $A\rtimes_\alpha G$ when it is not obvious what $\A$ is
  % or
  % if it is desirable to put properties of $A$ into evidence.
  % }}
\end{remark}

\subsection{Twists}
\label{sec:twists}
% As above, let $G$ be a second countable locally compact Hausdorff
% groupoid with a Haar system $\sset{\lambda^{u}}_{u\in\go}$.
A twist $E$ over $G$, or alternatively, a $\T$-groupoid over $G$, is a
central groupoid extension
\begin{equation}
  \label{eq:2}
  \xymatrix{\go\times\T\ar[r]^-{\iota}&E\ar[r]^{j}&G.}
\end{equation}
A central extension is one such that $\iota(r(e),z)e=e\iota(s(e),z)$
for all $e\in E$ and $z\in\T$.  In particular, $E$ admits a (left or
right) $\T$-action $z\cdot e :=\iota(r(e),z)e$.  Since \eqref{eq:2} is
meant to be an extension of topologicial groupoids, we are insisting
that $i$ is a homeomorphism onto the kernel of $j$, and that $j$ is
open and continuous.  In particular, $E$ is also a principal
$\T$-bundle over $G$.  Note that if $G$ is a group, then \eqref{eq:2}
is just a central extension of locally compact groups just as in
\eqref{eq:1}.

As in \cites{muhwil:plms395,muhwil:ms92}, we associate a \cs-algebra
to a twist \eqref{eq:2} as follows.  We let
\begin{equation}
  \label{eq:3}
  C_{c}(G;E)=\set{f\in C_{c}(E):\text{$f(ze)=zf(e)$ for all $z\in\T$ and
      $e\in E$}}.
\end{equation}
Then $C_{c}(G;E)$ becomes a $*$-algebra with respect to
\begin{equation*}
  f*g(e') =\int_{G} f(e)g(e^{-1}e')
  \,d\lambda^{r(e')}(j(e))\quad\text{and}\quad f^{*}(e)=\overline{f(e^{-1})}.
\end{equation*}
The integral that defines $f\ast g$ makes sense because for fixed
$e'\in E$ the expression $f(e)g(e^{-1}e')$ is a function of $j(e)\in
G$.  Its universal \cs-completion is denoted by $\cs(G;E)$ and its
completion with respect to its regular representations is denoted by
$\cs_{r}(G;E)$.

\begin{example}[Projective Representations]
  \label{ex-group}
  Suppose that $E$ is a twist over a \emph{group} $G$.  Then we get a
  Haar measure on $E=\T\times G$ 
  as the product of Haar measures on $\T$ and $G$ (normalized such
  that $\T$ has measure 1).  Then multiplication $f\ast g$ in
  $C_{c}(G;E)$ can be written as an integral over $E$,
  \begin{equation*}
    f*g(e') =\int_{E} f(e)g(e^{-1}e')
    \,d\lambda(e)
  \end{equation*}
  In other words, $C_{c}(G;E)$ is a sub $\ast$-algebra of the
  convolution algebra $C_c(E)$.  If $\pi$ is a unitary representation
  of $E$, then for $f\in C_c(G;E)$ and $z\in\T$ we find:
  \begin{align*}
    \pi(f) &= \int_E f(e)\pi(e)d\lambda(e) = \int_E zf(\bar z
  e)\pi(e)d\lambda(e) = z \int_E f(e)\pi(z e)d\lambda(e) \\
&=z\pi(z)\pi(f).
  \end{align*}
 In other words, if we decompose the Hilbert space
  $\H_\pi$ according to $\T$-type, then the restriction of $\pi$ to
  $C_c(G;E)$ is zero on all subspaces, excecpt the one with $\T$-type
  $\pi(z)=\bar z I_{\H_\pi}$.  Thus, if $G$ is a group then $\cs(G;E)$
  is the quotient of $\cs(E)$ corresponding to those unitary
  representations $\pi$ of $E$ that satisfy $\pi(i(z))=\bar z
  I_{\H_{\pi}}$.

  Central extensions of $G$ by $\T$ are classified (up to isomorphism)
  by $H^2(G,\T)$.  If $c$ is a Borel cross section for $j:E\to G$ such
  that $c(e)$ is the identity element of $E$, then the corresponding
  Borel $2$-cocycle $\omega\in Z^{2}(G,\T)$ is determined by
  $c(s)c(r)= \omega(s,r)c(sr)$.
  
  Recall that an {\em $\omega$-multiplier representation} of $G$ is a
  Borel map $\bar \pi:G\to U(\H)$ such that $\bar \pi(s)\bar\pi(r)
  =\omega(s,r)\bar\pi(sr)$.  Note that $\omega$-multiplier
  representations of $G$ and are in one-to-one correspondence with
  unitary representations $\pi$ of $E$ that satisfy $\pi(i(z))=z
  I_{\H}$.

  Therefore it is not surprising that $\cs(G;E)$ is isomorphic to
  $\cs(G,\bar \omega)$ where $\cs(G,\bar \omega)$ is the universal
  \cs-algebra for $\bar \omega$-multiplier representations of $G$ and
  $\bar \omega$ is the complex conjugate of $\omega$.

\end{example}

\subsection{\boldmath Fell Bundles and their \cs-Algebras}
\label{sec:fell-bundles-their}

{\emergencystretch=20pt A Fell bundle over a locally compact Hausdorff groupoid $G$ is an
upper semicontinuous Banach bundle $p:\B\to G$ equipped with a
continuous, bilinear, associative multiplication $(a,b)\mapsto ab$
from $\B^{(2)}=\set{(a,b)\in\B\times\B:(p(a),p(b))\in G^{(2)}}$ to
$\B$ such that the diagram
\[\xymatrix{\B^{(2)}\ar[r]\ar[d]_p&\B\ar[d]^p\\
  G^{(2)}\ar[r]&G }\]
commutes, and such that there is a continuous involution $b\mapsto
b^{*}$ from $\B$ to $\B$ such that
\[\xymatrix@C+2pc{\B\ar[r]^{b\mapsto b^*}\ar[d]_p&\B\ar[d]^p\\
  G\ar[r]_{x\mapsto x^{-1}}&G }\] commutes, and such that, as usual,
\[ (ab)^{*}=b^{*}a^{*}.\] These axioms imply that for a unit $u\in
\go$ the fiber $B(u)$ is a Banach $\ast$-algebra with respect to the
inherited operations, while an arbitrary fiber $B(x)$ is a left
$B(r(x))$ and right $B(s(x))$ bimodule.  Finally, for $\B$ to be a
Fell bundle it is required that the $\ast$-algebra $B(u)$ is a
\cs-algebra, while $B(x)$ must be a $B(r(x))\sme B(s(x))$-\ib\ when
given inner products
\begin{equation*}
  \lip B(r(x))<a,b>:=ab^{*}\quad\text{and}\quad \rip B(s(x))
  <a,b> :=a^{*}b.
\end{equation*}
If $\B$ is a Fell bundle over $G$ we can make $\sa_{c}(G,\B)$ into a
$*$-algebra in a straightforward way (provided $G$ has a Haar system).
That is, we define
\begin{equation*}
  f*g(x):=\int_{G}f(\eta)g(y^{-1}x)\,d\lambda^{r(x)}(y)
  \quad\text{and}\quad f^{*}(x):=f(x^{-1})^{*}.
\end{equation*}
We can then form the universal completion $\cs(G,\B)$ as well as the
reduced one $\cs_{r}(G,\B)$.

Fell bundles and their associated \cs-completions include virtually
all known \cs-algebras associated to dynamical systems (see
\cite{muhwil:dm08}*{\S2}).  We include the examples that are relevant
to our discussion here below.\par}

\begin{example}[Twists Revisited]
  \label{ex-twist-fell}
  Let $E$ be a twist over $G$ as in \S\ref{sec:twists}.  Then $E$ is a
  principal $\T$-bundle.  If we let $\B$ be the associated complex
  line bundle; that is, let $\B$ be the quotient of $E\times \C$ by
  the diagonal $\T$-action $z\cdot (e,\lambda)=(ze,\bar z\lambda)$.
  Then $\B$ is a line bundle over $G$ which we can treat as Fell
  bundle (see \cite{muhwil:dm08}*{Example~2.4}).  Note that the
  sections of $\B$ correspond to continuous functions on $E$ which
  satisfy
  \begin{equation*}
    f(ze)=\bar z f(e).
  \end{equation*}
  Comparing the above with \eqref{eq:3}, it is not hard to see that
  $\cs(\B)$ is isomorphic to $\cs(G;\eop)$ where $\eop$ is the
  conjugate $\T$-bundle to $E$.\footnote{This subtlety was overlooked
    in \cite{muhwil:dm08}*{Example~2.9} were it is erroneously claimed
    to be isomorphic to $\cs(G;E)$. As is often the case with
    mathematical constructs where a choice of sign or conjugate is
    involved, different authors make different choices.  We have
    chosen to keep our notation consistent with the published
    literature and not redefine $\cs(G;E)$ to suit the present
    circumstances.}
  
  To recover $\cs(G;E)$, we work with the line bundle $\CC$ associated
  to $\eop$.  Note that $\CC$ can be thought of as the quotient of
  $E\times \C$ with respect to the $\T$-action $z\cdot (e,\lambda)=(z
  e, z\lambda)$.  Then $\cs(G,\CC)\cong \cs(G;E)$, and it is not hard
  to see that $\cs_{r}(G,\CC)\cong \cs_{r}(G;E)$.
\end{example}

\begin{example}[Groupoid Crossed Products]
  \label{ex-cross-prod}
  Let $(\A,G,\alpha)$ be a groupoid dynamical system.  Then we can
  make $\B:=s^{*}\A=\set{(\gamma,a):a\in A(s(\gamma))}$ into a Fell
  bundle where $(\gamma,a)(\eta,b)=
  (\gamma\eta,\alpha_{\eta}^{-1}(a)b)$ and
  $(\gamma,a)^{*}=(\gamma^{-1},\alpha_{\gamma}^{-1}(a))$. Since the
  map $(\gamma,a)\mapsto (\alpha_{\gamma}(a),\gamma)$ is a Fell bundle
  isomorphism of $\B$ onto the bundle constructed in
  \cite{muhwil:dm08}*{Example~2.1}, it follows as in
  \cite{muhwil:dm08}*{Example~2.8}, that $\cs(G,\B)\cong
  \A\rtimes_{\alpha}G$. (We have used $\B=s^{*}\A$, rather than
  $r^{*}\A$ as in \cite{muhwil:dm08} as it makes some of the formulas
  in \S\ref{sec:main-theorem} a bit tidier.  This is also Muhly's
  original formulation from \cite{muh:cm01}*{\S3}.)  It follows as in
  \cite{simwil:nyjm13}*{Example~11} that $\cs_{r}(G,\B)\cong
  \A\rtimes_{\alpha,r}G$.
\end{example}

\section{Building the $\T$-Groupoid}
\label{sec:phill-raeb-appr}

In the next section we prove that, for continuous trace $\A$ and
locally compact second countable $G$, the crossed product
$\A\rtimes_{\alpha}G$ is Morita equivalent to $\cs(\uG;\uE)$.  In this
section we construct the pair $(\uG,\uE)$.  The construction can be
summarized as follows.

If the Dixmier-Douady invariant of $\A$ is zero, then $A\cong \K(\X)$
for some $C_0(\go)$ Hilbert module $\X$.  In that case, define the
$\T$-groupoid
\[ E:= \{(x,U)\mid U\colon \X(s(x))\to \X(r(x)) \;\text{is a unitary
  with}\; \alpha_x = \Ad U \;\text{for}\;x\in G\}\] with
$(x,U)(y,V):=(xy, UV)$.  We show that $\cs(G;E)$ is Morita equivalent
to $\K(\X)\rtimes_{\alpha} G$ for both the maximal and reduced
$\cs$-algebras.

Even if the Dixmier-Douady invariant of $\A$ is not zero, there always
exists an open cover $\U=\sset{U_{i}}$ of $\go$ by pre-compact open
sets such that the restriction of $\A$ to each $U_i$ has zero
Dixmier-Douady invariant.
% and continuous sections $p_i$ of $\A$, such that $p_i(u)$ is a
% rank-one projection for every $u\in U_i$.
Let $\uG:=G[\U]$ be the groupoid with unit space $\uGo := \coprod U_i$
obtained as the pullback of $G$ via $\uGo \to \go$.  As a pullback
groupoid, $\uG$ is equivalent to $G$.

The convolution $\cs$-algebras (maximal or reduced) of equivalent
groupoids are Morita equivalent.  We show that the crossed products
(maximal or reduced) $\A\rtimes_\alpha G$ and $\uA\rtimes_\alpha \uG$
are Morita equivalent, if we let $\uA$ be the pullback of $\A$ via
$\uGo\to \go$.

By construction, $\uA$ has Dixmier-Douady invariant zero, and
therefore there is a twist $\uE$ such that $\uA\rtimes_\alpha \uG$ is
Morita equivalent to $\cs(\uG;\uE)$.

A technical difficulty is to provide $\uE$ with the right topology,
and to prove that it satisfies the axioms of a twist.  To this end we
introduce two auxiliary groupoids $\Aut \A$ and $\Iso(\XX)$, whose
construction may be of independent interest.

\subsection{Two Useful Groupoids}
\label{sec:some-usef-topol}
{\emergencystretch=15pt In this section we introduce two groupoids $\Aut \A$ and $\Iso(\XX)$
that are convenient in what follows.  The constructions given here are
valid and may be of use in other contexts, even if $\A$ is not
continuous trace.\par}  \vskip 6pt

Let $p:\A\to\go$ be an upper semicontinuous \cs-bundle over $\go$.  We
define
\begin{equation*}
  \Aut\A:=\set{(u,\alpha,v)\mid\text{$\alpha:A(v)\to A(u)$ is a
      $*$-isomorphism}}. 
\end{equation*}
Then $\Aut\A$ is a groupoid with respect to the natural operations.
\begin{prop}
  \label{prop-auta}
  If $p:\A\to\go$ is an upper semicontinuous \cs-bundle over $\go$,
  then $\Aut\A$ has a Hausdorff topology making it into a topological
  groupoid such that $\sset{(u_{i},\alpha_{i},v_{i})}$ converges to
  $(u,\alpha,v)$ if and only if
  \begin{enumerate}
  \item $u_{i}\to u$,
  \item $v_{i}\to v$ and
  \item if $a_{i}\to a_{0}$ in $\A$ and $p(a_{i})=v_{i}$, then
    $\alpha_{i}(a_{i})\to \alpha(a_{0})$ in $\A$.
  \end{enumerate}
\end{prop}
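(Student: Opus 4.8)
The plan is to realize the required topology on $\Aut\A$ as an initial topology. Recall that an upper semicontinuous \cs-bundle has enough continuous sections: for every $v\in\go$ and every $a\in A(v)$ there is $f\in\sa_{c}(\go,\A)$ with $f(v)=a$, and a net $a_{i}\to a$ in $\A$ if and only if $p(a_{i})\to p(a)$ and $\|a_{i}-f(p(a_{i}))\|\to 0$ for one, hence every, continuous section $f$ with $f(p(a))=a$ (see \cite{wil:crossed}*{Appendix~C}). For $f\in\sa_{c}(\go,\A)$ put
\[
  \Phi_{f}\colon\Aut\A\to\A,\qquad \Phi_{f}(u,\alpha,v):=\alpha\bigl(f(v)\bigr)\in A(u),
\]
and write $r(u,\alpha,v)=u$, $s(u,\alpha,v)=v$. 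I would give $\Aut\A$ the coarsest topology making $r$, $s$, and every $\Phi_{f}$ continuous. By the universal property of an initial topology, a net $(u_{i},\alpha_{i},v_{i})$ then converges to $(u,\alpha,v)$ precisely when $u_{i}\to u$, $v_{i}\to v$ (conditions (a) and (b)) and $\Phi_{f}(u_{i},\alpha_{i},v_{i})\to\Phi_{f}(u,\alpha,v)$ in $\A$ for every $f$; so the content of the proposition is that this last family of conditions is equivalent to (c).

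That (c) forces each $\Phi_{f}$ to converge is immediate: apply (c) to $a_{i}:=f(v_{i})$, which converges to $f(v)$ because $f$ is continuous and $v_{i}\to v$. Conversely, suppose every $\Phi_{f}$ converges and let $a_{i}\to a_{0}$ in $\A$ with $p(a_{i})=v_{i}$. Choose $f\in\sa_{c}(\go,\A)$ with $f(v)=a_{0}$; then $f(v_{i})\to a_{0}$, so $a_{i}-f(v_{i})\to 0$ in $A(v_{i})$ by continuity of subtraction, hence $\|a_{i}-f(v_{i})\|\to0$ by upper semicontinuity of the norm. Since $\alpha_{i}$ is a $*$-isomorphism, it is isometric, so $\|\alpha_{i}(a_{i})-\Phi_{f}(u_{i},\alpha_{i},v_{i})\|=\|\alpha_{i}\bigl(a_{i}-f(v_{i})\bigr)\|\to0$, while $\Phi_{f}(u_{i},\alpha_{i},v_{i})\to\Phi_{f}(u,\alpha,v)=\alpha(a_{0})$ by hypothesis; the standard fact that a net lying within vanishing norm distance of a convergent net shares its limit then yields $\alpha_{i}(a_{i})\to\alpha(a_{0})$. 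This is the main step, and it is exactly where one uses that $*$-isomorphisms are isometric together with the upper semicontinuity of the norm: these are what let one replace an arbitrary approach to the fibre $A(v)$ by the canonical approach along a continuous section.

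It then remains to check that the algebraic structure is compatible with the topology and that the topology is Hausdorff. The operations are the inclusion of units $u\mapsto(u,\id_{A(u)},u)$, the range and source maps $r,s$, the product $(u,\alpha,w)(w,\beta,v)=(u,\alpha\circ\beta,v)$, and the inverse $(u,\alpha,v)^{-1}=(v,\alpha^{-1},u)$; their continuity follows directly from the convergence criterion. For the inverse, say, given $(u_{i},\alpha_{i},v_{i})\to(u,\alpha,v)$ and $b_{i}\to b_{0}$ in $\A$ with $p(b_{i})=u_{i}$, pick $h\in\sa_{c}(\go,\A)$ with $h(v)=\alpha^{-1}(b_{0})$; the criterion gives $\alpha_{i}(h(v_{i}))\to b_{0}$, so $\|\alpha_{i}(h(v_{i}))-b_{i}\|\to0$, hence $\|h(v_{i})-\alpha_{i}^{-1}(b_{i})\|\to0$ by isometry of $\alpha_{i}^{-1}$, and therefore $\alpha_{i}^{-1}(b_{i})\to\alpha^{-1}(b_{0})$; the product is handled the same way. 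For Hausdorffness, two elements with distinct range or source units are separated by pulling back a separation in $\go$ along $r$ or $s$, while two elements $(u,\alpha,v)\ne(u,\alpha',v)$ over the same units are separated using a section $f$ with $f(v)=a$ for some $a\in A(v)$ with $\alpha(a)\ne\alpha'(a)$, together with a separation of $\alpha(a)$ from $\alpha'(a)$ inside $\A$. I expect this Hausdorff step to be the main obstacle in full generality: the total space of an arbitrary upper semicontinuous \cs-bundle need not itself be Hausdorff, so the separation of $\alpha(a)$ from $\alpha'(a)$ cannot simply be imported from $\A$ and must be argued with care — whereas in the case of real interest, where $A=\sa_{0}(\go,\A)$ has continuous trace and $\A$ is a locally trivial Banach bundle with Hausdorff total space, it is immediate. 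The remaining verifications are routine unwindings of the definitions and of the convergence criterion established above.
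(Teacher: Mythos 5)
Your topology is the paper's topology: the subbasic sets $\W(U,a,b,V,\epsilon)$ used in \S\ref{sec:proof-proposition} are exactly the sets $s^{-1}(V)\cap\Phi_{a}^{-1}\bigl(\set{c\in\A:p(c)\in U,\ \|c-b(p(c))\|<\epsilon}\bigr)$, so your initial topology coincides with the paper's subbasis topology, and your verification of the convergence criterion and of the continuity of the operations is the same argument as Lemma~\ref{lem-iff} and the proof of Proposition~\ref{prop-auta} (isometry of $*$-isomorphisms together with \cite{wil:crossed}*{Proposition~C.20}).

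The step you leave open --- separating $(u,\alpha,v)$ from $(u,\beta,v)$ when $\A$ is merely upper semicontinuous --- is a genuine gap, and your suspicion that it is the delicate point is well founded. The paper's Lemma~\ref{lem-t2} disposes of it by citing Corollary~\ref{cor-fibre-top}, but that corollary only identifies the \emph{relative} topology on the set of triples with fixed units as the point-norm topology, and Hausdorffness of a subspace does not separate its points in the ambient space. In fact the assertion fails in the stated generality. Take $\go=\{0\}\cup\set{1/n:n\ge1}$ and let $A$ be the $C(\go)$-algebra $c_{0}\bigl(\set{1/n},\C^{2}\bigr)\oplus\C^{2}$ acting pointwise; every fibre, including the one at $0$, is $\C^{2}$, yet every section satisfies $\|f(1/n)\|\to0$ regardless of $f(0)$. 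Hence a net $a_{n}\in A(1/n)$ with $\|a_{n}\|\to0$ converges in $\A$ to \emph{every} point of $A(0)$, and consequently the sequence $(1/n,\id,1/n)$ satisfies conditions (a), (b) and (c) with limit $(0,\alpha,0)$ for \emph{both} automorphisms $\alpha$ of $\C^{2}$; no Hausdorff topology can have that convergence criterion. So your retreat to the case where the norm functions $u\mapsto\|f(u)\|$ are continuous (equivalently, $\A$ Hausdorff) is not merely convenient but necessary: there the separation of $\alpha(a_{0})$ from $\beta(a_{0})$ inside $\A$ pulls back along $\Phi_{a}$ to disjoint neighborhoods, exactly as you indicate. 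That hypothesis holds wherever the proposition is actually invoked, since there $A$ has continuous trace; state it explicitly and your argument closes.
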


The proof of Proposition~\ref{prop-auta} is a bit tedious.  In
general, one can not specify a topology simply by specifying a
criteria for nets to converge: the family of convergent nets must
satisfy certain axioms (for example, see
\cite{kel:general}*{Chap.~2}).  Furthermore, it seems useful to
exhibit a bona fide base for our topology.  To avoid distractions,
we'll do this in \S\ref{sec:proof-proposition}.

{\emergencystretch=25pt \begin{prop}
    \label{prop-dy-sys}
    Let $\sset{\alpha_{x}}_{x\in G}$ be a family of $*$-isomorphisms
    $\alpha_{x}:A(s(x))\to A(r(x))$.  Then $(\A,G,\alpha)$ is a
    groupoid dynamical system if and only if $x\mapsto
    ((r(x),\alpha_{x},s(x))$ is a continuous groupoid homomorphism
    $G\to \Aut\A$.
  \end{prop}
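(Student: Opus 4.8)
The plan is to split the biconditional into a trivial algebraic part and a topological part, the latter being essentially a restatement of Proposition~\ref{prop-auta}. First I would observe that, by hypothesis, each $\alpha_{x}$ is a $*$-isomorphism $A(s(x))\to A(r(x))$, so the triple $\Phi(x):=(r(x),\alpha_{x},s(x))$ genuinely lies in $\Aut\A$, with source $s(x)$ and range $r(x)$. Unwinding the groupoid operations on $\Aut\A$: $\Phi(x)$ and $\Phi(y)$ are composable exactly when $s(x)=r(y)$, i.e.\ when $(x,y)\in G^{(2)}$, and then $\Phi(x)\Phi(y)=(r(x),\alpha_{x}\circ\alpha_{y},s(y))$ while $\Phi(xy)=(r(x),\alpha_{xy},s(y))$. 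Hence $\Phi$ is multiplicative iff $\alpha_{xy}=\alpha_{x}\circ\alpha_{y}$ for all composable pairs. Taking $x=y=u$ a unit then forces $\alpha_{u}=\id_{A(u)}$ (an idempotent $*$-automorphism is the identity), and consequently $\alpha_{x^{-1}}=\alpha_{x}^{-1}$, so a multiplicative $\Phi$ is automatically a groupoid homomorphism. Since $\alpha_{xy}=\alpha_{x}\circ\alpha_{y}$ together with $\alpha_{u}=\id$ are precisely the algebraic axioms for $\alpha$ to be a $G$-action on $\A$, what remains is to prove that --- given these algebraic data --- the map $\alpha\colon G\times_{s,p}\A\to\A$ is continuous if and only if $\Phi\colon G\to\Aut\A$ is continuous for the topology of Proposition~\ref{prop-auta}.

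Both implications come down to matching net-convergence criteria, using that a map between topological spaces is continuous iff it preserves convergence of nets, and that in $G\times_{s,p}\A$ (a subspace of $G\times\A$) a net $(x_{i},a_{i})\to(x,a)$ iff $x_{i}\to x$ in $G$ and $a_{i}\to a$ in $\A$. For $(\Leftarrow)$: assume $\Phi$ is continuous and let $(x_{i},a_{i})\to(x,a)$ in $G\times_{s,p}\A$, so $x_{i}\to x$, $a_{i}\to a$, $p(a_{i})=s(x_{i})$, and $p(a)=s(x)$. Then $\Phi(x_{i})\to\Phi(x)$ in $\Aut\A$, so by Proposition~\ref{prop-auta} this net satisfies condition~(c); applying (c) to $\set{a_{i}}$ (which converges to $a$ with $p(a_{i})=s(x_{i})$) gives $\alpha_{x_{i}}(a_{i})\to\alpha_{x}(a)$, i.e.\ $\alpha(x_{i},a_{i})\to\alpha(x,a)$. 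For $(\Rightarrow)$: assume $\alpha$ is continuous and let $x_{i}\to x$ in $G$; I check the criteria of Proposition~\ref{prop-auta} for $\Phi(x_{i})\to\Phi(x)$. Conditions (a) and (b), $r(x_{i})\to r(x)$ and $s(x_{i})\to s(x)$, hold because $r$ and $s$ are continuous. For (c), let $a_{i}\to a_{0}$ in $\A$ with $p(a_{i})=s(x_{i})$; continuity of $p$ and Hausdorffness of $\go$ give $p(a_{0})=\lim p(a_{i})=\lim s(x_{i})=s(x)$, so $\alpha_{x}(a_{0})$ is defined and $(x_{i},a_{i})\to(x,a_{0})$ in $G\times_{s,p}\A$; continuity of $\alpha$ then yields $\alpha_{x_{i}}(a_{i})=\alpha(x_{i},a_{i})\to\alpha(x,a_{0})=\alpha_{x}(a_{0})$. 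Thus all three conditions hold, $\Phi(x_{i})\to\Phi(x)$, and $\Phi$ is continuous.

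I do not anticipate a real obstacle: once Proposition~\ref{prop-auta} is in hand, this proposition is close to a tautology, since in both directions the content is the single statement ``$\alpha_{x_{i}}(a_{i})\to\alpha_{x}(a)$ whenever $x_{i}\to x$ in $G$ and $a_{i}\to a$ in $\A$ with $p(a_{i})=s(x_{i})$.'' The only points requiring a moment's care are the bookkeeping of the action axioms in the first step, and, in the $(\Rightarrow)$ direction, checking that the limit $a_{0}$ of a net lying in the fibres $A(s(x_{i}))$ actually lies in $A(s(x))$, so that $\alpha_{x}(a_{0})$ is meaningful before condition~(c) is invoked --- both being immediate from continuity of the bundle and groupoid structure maps together with Hausdorffness of $\go$.
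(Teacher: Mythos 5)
Your proof is correct and is exactly the argument the paper intends: the paper's proof of this proposition consists of the single sentence ``This is an immediate consequence of Proposition~\ref{prop-auta},'' and your write-up simply fills in those details (the algebraic bookkeeping showing multiplicativity of $x\mapsto(r(x),\alpha_x,s(x))$ is the action axiom, plus the matching of the net-convergence criterion of Proposition~\ref{prop-auta} with joint continuity of $\alpha$ on $G\times_{s,p}\A$). The one point you rightly flag --- that in the forward direction the limit $a_0$ lies in $A(s(x))$ so that $\alpha_x(a_0)$ makes sense --- is handled correctly via continuity of $p$ and Hausdorffness of $\go$.
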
}
\begin{proof}
  This is an immediate consequence of Proposition~\ref{prop-auta}.
\end{proof}

{\emergencystretch=25pt Now we suppose that $\X$ is a Hilbert
  $C_{0}(\go)$-module.  Then
  \cite{fd:representations1}*{Theorem~II.13.18} implies that there is
  a topology on $\XX=\coprod_{u\in\go}\X(u)$ making $q:\XX\to\go$ into
  a continuous Banach bundle, or in this case a continuous Hilbert
  bundle, such that $\X\cong \sa_{0}(\go,\XX)$.\par}

We let
\begin{equation}
  \label{eq:5}
  \Iso(\XX):=\set{(u,V,v)\mid\text{$V:\X(v)\to\X(u)$ is a unitary}}
\end{equation}
be the groupoid with the obvious operations: $(u,V,v)(v,W,w)=(u,VW,w)$
etc.
\begin{prop}
  \label{prop-iso-top}
  If $\X=\sa_{0}(\go,\XX)$ is a Hilbert $C_{0}(\go)$-module, then
  $\Iso(\XX)$ has a Hausdorff topology making it into a topological
  groupoid such that a net $\sset{(u_{i},V_{i},v_{i})}$ converges to
  $(u,V,v)$ if and only if
  \begin{enumerate}
  \item $u_{i}\to u$,
  \item $v_{i}\to v$ and
  \item if $h_{i}\to h_{0}$ in $\XX$ and $q(h_{i})=v_{i}$, then
    $V_{i}h_{i}\to Vh_{0}$ in $\XX$.
  \end{enumerate}
\end{prop}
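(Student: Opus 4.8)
The plan is to follow the same strategy as for Proposition~\ref{prop-auta}: first exhibit an explicit base for a topology on $\Iso(\XX)$, then check it really is a base, then verify the Hausdorff and topological-groupoid requirements, and finally identify the convergent nets. Throughout I would use that, since $\XX$ is a \emph{continuous} Hilbert bundle, the map $w\mapsto\|\sigma(w)\|$ is continuous for each $\sigma\in\Gamma_{0}(\go,\XX)$, that subtraction and the norm are continuous on $\XX$, and that through every point of $\XX$ there passes a continuous section, with the tubes $\set{h\in\XX: q(h)\in O,\ \|h-g(q(h))\|<\delta}$ forming a base for the topology of $\XX$ (all from \cite{fd:representations1}*{Theorem~II.13.18}). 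In fact the argument is so close to that of Proposition~\ref{prop-auta} that the details could be carried out in parallel in \S\ref{sec:proof-proposition}.

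For the base, to each $(u_{0},V_{0},v_{0})\in\Iso(\XX)$, each pair of open sets $O\ni u_{0}$ and $O'\ni v_{0}$ in $\go$, each $\epsilon>0$, and each pair of finite tuples $\underline h=(h_{1},\dots,h_{n})$, $\underline g=(g_{1},\dots,g_{n})$ of continuous sections of $\XX$ with $g_{k}(u_{0})=V_{0}h_{k}(v_{0})$ for all $k$, I would associate the set
\begin{equation*}
  N\bigl((u_{0},V_{0},v_{0});\underline h,\underline g,\epsilon,O,O'\bigr):=\set{(u,V,v)\in\Iso(\XX): u\in O,\ v\in O',\ \|Vh_{k}(v)-g_{k}(u)\|<\epsilon\ (1\le k\le n)},
\end{equation*}
which visibly contains its center. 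The one genuinely fiddly step --- the analogue of the ``tedious'' part of Proposition~\ref{prop-auta}, and the main obstacle in the sense of being the most bookkeeping-heavy --- is the base axiom: given $(u_{1},V_{1},v_{1})\in N_{1}\cap N_{2}$ for two such sets, produce a set of the above form centered at $(u_{1},V_{1},v_{1})$ inside $N_{1}\cap N_{2}$. One does this by letting $\underline h''$ be the concatenation of the two section-tuples, choosing for each entry $h''$ a continuous section $g''$ with $g''(u_{1})=V_{1}h''(v_{1})$, and then using continuity of $u\mapsto\|g''(u)-g(u)\|$ together with the \emph{strict} inequalities that place $(u_{1},V_{1},v_{1})$ in $N_{1}$ and $N_{2}$ to shrink $\epsilon$ and the open sets. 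Nothing deep happens, but this is where ``enough sections'' and continuity of the bundle are used.

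Hausdorffness is then easy: points with distinct image under source or range are separated by pulling back disjoint open sets of $\go$; and if $(u,V_{0},v)\ne(u,V_{1},v)$ one picks $h\in\Gamma_{0}(\go,\XX)$ with $V_{0}h(v)\ne V_{1}h(v)$, sets $\delta:=\|V_{0}h(v)-V_{1}h(v)\|>0$, chooses sections $g_{0},g_{1}$ through $V_{0}h(v),V_{1}h(v)$ over a neighborhood $O$ of $u$ on which $\|g_{0}-g_{1}\|>2\delta/3$, and observes that $N(\,\cdot\,;h,g_{0},\delta/3,O,\go)$ and $N(\,\cdot\,;h,g_{1},\delta/3,O,\go)$ are disjoint. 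For the groupoid operations: range and source are continuous by construction; inversion $(u,V,v)\mapsto(v,V^{-1},u)$ carries $N((u_{0},V_{0},v_{0});\underline h,\underline g,\epsilon,O,O')$ \emph{exactly onto} $N((v_{0},V_{0}^{-1},u_{0});\underline g,\underline h,\epsilon,O',O)$, using $\|V^{-1}g_{k}(u)-h_{k}(v)\|=\|g_{k}(u)-Vh_{k}(v)\|$ since $V$ is unitary, so it is a homeomorphism; and for multiplication, given a basic neighborhood $N((u_{0},V_{0}W_{0},w_{0});\underline h,\underline g,\epsilon,O,O'')$ of a product, one chooses sections $\ell_{k}$ through $W_{0}h_{k}(w_{0})$ and checks, via $\|VWh_{k}(w)-g_{k}(u)\|\le\|Wh_{k}(w)-\ell_{k}(v)\|+\|V\ell_{k}(v)-g_{k}(u)\|$, that the pair $N((u_{0},V_{0},v_{0});\underline\ell,\underline g,\epsilon/2,O,\go)$ and $N((v_{0},W_{0},w_{0});\underline h,\underline\ell,\epsilon/2,\go,O'')$ does the job.

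Finally, for the convergence criterion: if $(u_{i},V_{i},v_{i})\to(u,V,v)$ then (a) and (b) follow from continuity of source and range, and for (c) --- given $h_{i}\to h_{0}$ with $q(h_{i})=v_{i}$, whence $q(h_{0})=v$ --- I would pick a section $h$ with $h(v)=h_{0}$, note $\|h_{i}-h(v_{i})\|\to 0$ (both $h_{i}$ and $h(v_{i})$ converge to $h_{0}$ over the base point $v_{i}$), choose a section $g$ with $g(u)=Vh_{0}$, and use that $(u_{i},V_{i},v_{i})$ lies eventually in $N((u,V,v);h,g,\delta/2,O,\go)$ together with $\|V_{i}h_{i}-V_{i}h(v_{i})\|=\|h_{i}-h(v_{i})\|$ to get $V_{i}h_{i}\to Vh_{0}$. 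Conversely, if (a)--(c) hold, then for a basic neighborhood $N((u,V,v);\underline h,\underline g,\epsilon,O,O')$ one has $u_{i}\in O$, $v_{i}\in O'$ eventually; applying (c) to the nets $h_{k}(v_{i})\to h_{k}(v)$ gives $V_{i}h_{k}(v_{i})\to Vh_{k}(v)=g_{k}(u)$, and since $g_{k}(u_{i})\to g_{k}(u)$, continuity of subtraction and the norm force $\|V_{i}h_{k}(v_{i})-g_{k}(u_{i})\|\to 0$, so the net is eventually in the basic neighborhood. Hence the topology is precisely the one characterized by (a)--(c), which also shows it is independent of the choices made in setting up the base.
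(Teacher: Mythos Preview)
Your proposal is correct and follows essentially the same template as the paper's proof of Proposition~\ref{prop-auta}; indeed, the paper itself omits the proof of Proposition~\ref{prop-iso-top} entirely, saying only that it is similar to that of Proposition~\ref{prop-auta}. The one cosmetic difference is that you work directly with a \emph{base} indexed by finite tuples of sections, whereas the paper in \S\ref{sec:proof-proposition} uses a \emph{subbasis} indexed by single pairs $(a,b)$ and lets finite intersections generate the base---your concatenated tuples are exactly those finite intersections, so the two descriptions coincide and the remaining verifications (Hausdorffness, continuity of inversion via isometry of unitaries, continuity of multiplication, and the two directions of the convergence criterion) match the arguments of Lemmas~\ref{lem-iff} and~\ref{lem-t2} line for line.
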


The proof is similar to that for Proposition~\ref{prop-auta}, so we
omit it.  Note that the relative topology on $U(\X(u))$ is the strong
operator topology.  In particular, $\Iso(\XX)$ is not locally compact
unless all the fibers $\X(u)$ are finite dimensional.

Now we restrict ourselves to the special case where $\X$ is an $A\sme
C_{0}(\go)$-\ib; i.e., $\X$ is a right Hilbert $C_{0}(\go)$-module and
$A$ is isomorphic to the generalized compact operators $\K(\X)$ on
$\X$ via the left action of $A$.  Then it is not hard to see that
$A\cong \sa_{0}(\go,\A)$ with $A(u)$ identified with the compact
operators $\K(\X(u))$ in such a way that $\lip A(u)<h,k>$ is
identified with the rank-one operator $\theta_{h,k}$ where
$\theta_{h,k}(l)=\rip\C<k,l>h$.  A unitary operator $U:\X(v)\to \X(u)$
induces a $*$-isomorphism $\alpha:=\Ad U:A(v)\to A(u)$ given by
$\alpha(a)\cdot h= U(a\cdot U^{*}h)$, with $a\in A(v), h\in \X(u)$.

\begin{prop}\emergencystretch=25pt
  \label{prop-new-j-map}
  Let $\X=\sa_{0}(\go,\XX)$ and $A=\K(\X)=\sa_{0}(\go,\A)$ be as
  above.  Then there is a short exact sequence of topological
  groupoids
  \begin{equation*}
    \xymatrix{\go\times\T\ar[r]^{\iotau}&\Iso(\XX)\ar[r]^{\uj}&\Aut \A},
  \end{equation*}
  where $\uj(u,U,v) = (u,\Ad U,v)$ is an continuous open surjection,
  and $\iotau(u,z)=(u,zI_{\X(u)},u)$ is a homeomorphism onto the
  kernel of $\uj$.  In particular, given $(u,\alpha,v)\in\Aut\A$ with
  $\alpha=\Ad U$, there is a neighborhood $N$ of $(u,\alpha,v)$ and a
  continuous section $\beta:N\to \Iso(\H)$ for $\uj$ such that
  $\beta(u,\alpha,v)=(u,U,v)$.
\end{prop}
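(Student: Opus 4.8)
The plan is to reduce the proposition to one explicit construction. Its algebraic core is an elementary fact about compact operators on Hilbert spaces: if $\gamma\colon\K(\X(x))\to\K(\X(w))$ is a $*$-isomorphism, $\xi'\in\X(x)$ is a unit vector, and $\zeta'\in\X(w)$ is \emph{any} unit vector with $\theta_{\zeta',\zeta'}=\gamma(\theta_{\xi',\xi'})$ — such a $\zeta'$ exists since $\gamma$ carries the minimal projection $\theta_{\xi',\xi'}$ to a minimal projection — then
\[
V(h):=\gamma\bigl(\theta_{h,\xi'}\bigr)\zeta',\qquad h\in\X(x),
\]
defines a unitary $V\colon\X(x)\to\X(w)$ with $\Ad V=\gamma$, as one checks directly from $\theta_{ah,\xi'}=a\,\theta_{h,\xi'}$ and $\theta_{h,\xi'}^{*}\theta_{h,\xi'}=\|h\|^{2}\theta_{\xi',\xi'}$. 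In particular $\uj$ is onto. Since the commutant of $\K(\X(u))$ in the bounded operators on $\X(u)$ is $\C I_{\X(u)}$, a unitary $U$ on $\X(u)$ has $\Ad U=\id_{A(u)}$ exactly when $U\in\T I_{\X(u)}$, so $\ker\uj=\set{(u,zI_{\X(u)},u):u\in\go,\,z\in\T}=\iotau(\go\times\T)$; thus $\iotau$ is a bijective groupoid homomorphism onto $\ker\uj$ and $\uj$ is evidently a groupoid homomorphism.

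Next I would treat the topological statements using the net descriptions of the topologies on $\Iso(\XX)$ (Proposition~\ref{prop-iso-top}) and $\Aut\A$ (Proposition~\ref{prop-auta}), together with three standard continuity properties: the vector–space operations and the norm are continuous on $\XX$; the section pairings $\lip A<\cdot,\cdot>$ and $\rip\C<\cdot,\cdot>$ are continuous (being sections of $\A$ and of $\C$); and the module map $\A\times_{\go}\XX\to\XX$, $(a,h)\mapsto a\cdot h$, and the evaluation map $\Aut\A\times_{\go}\A\to\A$, $((w,\gamma,x),a)\mapsto\gamma(a)$, are continuous — the latter is just criterion~(c) of Proposition~\ref{prop-auta}, and the former follows as usual from continuity on the dense families of sections plus fibrewise boundedness (cf.\ \cite{wil:crossed}*{Appendix~C}). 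Continuity of $\iotau$ is then immediate from continuity of scalar multiplication; for $\iotau^{-1}$, if $(u_{i},z_{i}I_{\X(u_{i})},u_{i})\to(u,zI_{\X(u)},u)$, pick a section $h\in\X$ with $h(u)\neq0$, so criterion~(c) gives $z_{i}h(u_{i})\to zh(u)$, whence $\|(z_{i}-z)h(u_{i})\|\to0$ and, since $\|h(u_{i})\|\to\|h(u)\|\neq0$, $z_{i}\to z$. For continuity of $\uj$ it suffices, by the criterion of Proposition~\ref{prop-auta}, to check that if $(u_{i},U_{i},v_{i})\to(u,U,v)$ in $\Iso(\XX)$ and $a_{i}\to a_{0}$ in $\A$ with $p(a_{i})=v_{i}$, then $U_{i}a_{i}U_{i}^{*}\to Ua_{0}U^{*}$ in $\A$: given $\varepsilon>0$, approximate $a_{0}$ within $\varepsilon$ by $g(v)$ with $g=\sum_{m}\lip A<h^{(m)},k^{(m)}>\in\K(\X)$; using $\Ad U_{i}\bigl(g(v_{i})\bigr)=\sum_{m}\theta_{U_{i}h^{(m)}(v_{i}),\,U_{i}k^{(m)}(v_{i})}$ and $U_{i}h^{(m)}(v_{i})\to Uh^{(m)}(v)$ (criterion~(c) of Proposition~\ref{prop-iso-top}), one sees $U_{i}a_{i}U_{i}^{*}$ lies, in $\limsup$, within $\varepsilon$ of the value at $v_{i}$ of a continuous section of $\A$ converging to $Ua_{0}U^{*}$; letting $\varepsilon\to0$ and invoking the characterization of the topology on $\A$ gives the claim.

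The heart of the matter is the last assertion; openness of $\uj$ then follows formally, since a continuous surjection such that every point of the source lies in the image of a continuous section defined over an open subset of the target is open. To build $\beta$, fix $(u,\alpha,v)$ and a unitary $U$ with $\Ad U=\alpha$ (all fibres of $\XX$ are nonzero, as $\X$ is full over $C_{0}(\go)$). Choose a unit vector $\xi_{0}\in\X(v)$, set $\eta_{0}:=U\xi_{0}\in\X(u)$, and pick sections $\xi,\eta\in\X$ with $\xi(v)=\xi_{0}$ and $\eta(u)=\eta_{0}$. Let $N$ be the set of $(w,\gamma,x)\in\Aut\A$ for which $\xi(x)\neq0$ and $\gamma\bigl(\theta_{\xi^{(x)},\xi^{(x)}}\bigr)\cdot\eta(w)\neq0$, where $\xi^{(x)}:=\|\xi(x)\|^{-1}\xi(x)$; by the continuity properties above and closedness of the zero sections, $N$ is open and contains $(u,\alpha,v)$. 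For $(w,\gamma,x)\in N$ let $\zeta_{(w,\gamma,x)}$ be the unique unit vector of $\X(w)$ with $\theta_{\zeta,\zeta}=\gamma(\theta_{\xi^{(x)},\xi^{(x)}})$ and $\rip\C<\zeta,\eta(w)>>0$, and put
\[
\beta(w,\gamma,x):=\bigl(w,\,V_{(w,\gamma,x)},\,x\bigr),\qquad V_{(w,\gamma,x)}(h):=\gamma\bigl(\theta_{h,\xi^{(x)}}\bigr)\zeta_{(w,\gamma,x)}.
\]
By the algebraic core $V_{(w,\gamma,x)}$ is unitary with $\Ad V_{(w,\gamma,x)}=\gamma$, so $\uj\circ\beta=\id_{N}$; and at $(u,\alpha,v)$ one has $\xi^{(v)}=\xi_{0}$ and $\gamma(\theta_{\xi_{0},\xi_{0}})=\theta_{\eta_{0},\eta_{0}}$, so the positivity normalization forces $\zeta_{(u,\alpha,v)}=\eta_{0}$ and hence $V_{(u,\alpha,v)}=U$, i.e.\ $\beta(u,\alpha,v)=(u,U,v)$.

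The remaining point — the step I expect to be the main obstacle — is continuity of $\beta$, which I would verify via nets. Suppose $(w_{i},\gamma_{i},x_{i})\to(w,\gamma,x)$ in $N$, and write $\zeta_{i}:=\zeta_{(w_{i},\gamma_{i},x_{i})}$, $\zeta_{0}:=\zeta_{(w,\gamma,x)}$. Since $x\mapsto\theta_{\xi^{(x)},\xi^{(x)}}=\|\xi(x)\|^{-2}\,\lip A<\xi,\xi>(x)$ is continuous into $\A$ near $v$, we get $\theta_{\zeta_{i},\zeta_{i}}=\gamma_{i}(\theta_{\xi^{(x_{i})},\xi^{(x_{i})}})\to\gamma(\theta_{\xi^{(x)},\xi^{(x)}})=\theta_{\zeta_{0},\zeta_{0}}$ in $\A$, hence $\theta_{\zeta_{i},\zeta_{i}}\cdot\eta(w_{i})\to\theta_{\zeta_{0},\zeta_{0}}\cdot\eta(w)$ in $\XX$; as $\theta_{\zeta,\zeta}\cdot\eta(w)=\rip\C<\zeta,\eta(w)>\zeta$ with positive coefficient, comparing norms gives $\rip\C<\zeta_{i},\eta(w_{i})>\to\rip\C<\zeta_{0},\eta(w)>>0$, whence $\zeta_{i}=\rip\C<\zeta_{i},\eta(w_{i})>^{-1}\bigl(\theta_{\zeta_{i},\zeta_{i}}\cdot\eta(w_{i})\bigr)\to\zeta_{0}$ in $\XX$. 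Then for $h_{i}\to h_{0}$ in $\XX$ with $q(h_{i})=x_{i}$, writing $h_{i}$ as $h'(x_{i})$ plus a norm-null perturbation for a section $h'$ with $h'(x)=h_{0}$ gives $\theta_{h_{i},\xi^{(x_{i})}}\to\theta_{h_{0},\xi^{(x)}}$ in $\A$, so $\gamma_{i}(\theta_{h_{i},\xi^{(x_{i})}})\to\gamma(\theta_{h_{0},\xi^{(x)}})$ in $\A$ by criterion~(c) of Proposition~\ref{prop-auta}, and therefore
\[
V_{(w_{i},\gamma_{i},x_{i})}h_{i}=\gamma_{i}\bigl(\theta_{h_{i},\xi^{(x_{i})}}\bigr)\zeta_{i}\ \longrightarrow\ \gamma\bigl(\theta_{h_{0},\xi^{(x)}}\bigr)\zeta_{0}=V_{(w,\gamma,x)}h_{0}
\]
by continuity of the module action. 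By criterion~(c) of Proposition~\ref{prop-iso-top} this is precisely $\beta(w_{i},\gamma_{i},x_{i})\to\beta(w,\gamma,x)$, so $\beta\colon N\to\Iso(\XX)$ is a continuous local section of $\uj$; together with the formal remark above this yields the openness of $\uj$ and completes the proof.
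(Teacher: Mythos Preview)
Your proof is correct and follows essentially the same route as the paper: the surjectivity of $\uj$ and the construction of the local section both use the same formula $V(h)=\gamma(\theta_{h,\xi'})\zeta'$ (the paper writes this as $V(h)=\alpha(\lip A(v)<h,e>)\cdot f$), with $\zeta'$ chosen as the normalized image of a reference section under the moving rank-one projection; and the continuity arguments use the same net criteria from Propositions~\ref{prop-auta} and~\ref{prop-iso-top} together with continuity of the module action (the paper's Lemma~\ref{lem-product}). The only cosmetic differences are that the paper takes $N$ to be the explicit subbasic set $\W(U,a,b,V,\tfrac14)$ rather than your non-vanishing locus, and normalizes the section $e$ to be unit on a neighborhood rather than pointwise, while you in turn spell out the homeomorphism $\iotau$ that the paper leaves as ``clearly''.
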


Since establishing some of the assertions in
Proposition~\ref{prop-new-j-map}, such as the continuity and openness
of $\uj$, is a bit technical, we have moved the proof of the
proposition to \S\ref{sec:proof-prop-refpr} so as not to
distract from the matter at hand.

Note that $\Iso(\XX)$ admits a $\T$-action such that
\begin{equation*}
  z\cdot (u,U,v)=\iotau(u,z)(u,U,v)=(u,z U,v)=(u,U,v)\iotau(v,z).
\end{equation*}
Thus, except for the fact that $\Iso(\XX)$ and $\Aut\A$ are generally
not locally compact, we can view $\Iso(\XX)$ as a $\T$-groupoid over
$\Aut \A$.

\subsection{The Construction of the Twist}
\label{sec:construction-twist}

Suppose that $(\A,G,\alpha)$ is a groupoid dynamical system with
$A=\sa_{0}(\go,\A)$ continuous trace.  Then there is an open cover
$\U=\sset{U_{i}}$ of $\go$ by pre-compact open sets such that
$A_{U_{i}}:=\sa_{0}(U_{i},\A)$ is Morita equivalent to
$C_{0}(U_{i})$. (See, for example, \cite{rw:morita}*{Proposition~5.5})
In particular, $A_{U_{i}}$ can be identified with $\K(\X_{i})$ for a
Hilbert $C_{0}(U_{i})$-module $\X_{i}=\sa_{0}(U_{i},\XX_{i})$.

Let $\uG:=G[\U]$ be the pullback groupoid whose unit space is the
disjoint union $\uGo:=\coprod U_i=\set{(i,u):x\in U_{i}}$; that is,
\begin{equation*}
  G[\U]:=\set{(i,x,j):\text{$r(x)\in U_{i}$ and $s(x)\in U_{j}$}}
\end{equation*}
with $(i,x,j)(j,y,k)=(i,xy,k)$ etc.  As is well-known, any groupoid of
the form $G[\U]$ for an open cover $\U$ of $\go$ is equivalent to $G$
via $Z=\coprod_{i}G_{U_{i}}$.

% \begin{comment}
%   This is a standard construction, so E thinks we should remove the
%   following.  E could not figure out how to use sout with the align
%   and equation environments.

%   \sout{That is,}
%   \begin{align*}
%     r(i,x,j)&=(i,r(x))&s(i,x,j)&=(j,s(x)) \\
%     (i,x,j)(j,y,k)&=(i,xy,j)&(i,x,j)^{-1}&=(y,x^{-1},i).
%   \end{align*}
%   \sout{Then $Z=\set{(i,x):s(x)\in U_{i}}$ is a
%   $(G,\uG)$-equivalence; the structure maps on $Z$ are given by
%   $r_{Z}(i,x)=r(x)$ and $s_{Z}(i,x)=(i,s(x))$.  The $G$- and
%   $\uG$-actions are given by}
%   \begin{equation*}
%     y\cdot (i,x)=(i,yx)\quad\text{and}\quad (i,x)\cdot (i,y,j)=(j, xy).
%   \end{equation*}
%   \sout{(The topologies on $G=\coprod_{ij}G_{U_{j}}^{U_{i}}$,
%   $\uGo=\coprod_{i} U_{i}$ and $Z=\coprod_{i}G_{U_{i}}$ are the
%   direct sum topologies.)  In particular, $\uG$ is equivalent to
%   $G$. }

% \end{comment}

Let $\uA$ be the pull-back via the obvious map $\psi:\uGo\to\go$:
\begin{equation*}
  \uA=\set{(i,u,b):\text{$u\in U_{i}$ and $b\in A(u)$}}.
\end{equation*}
Note that $\psi^{*}A\cong \sa_{0}(\uGo,\uA)$ by
\cite{raewil:tams85}*{Proposition~1.3}.  It is not hard to check that
$\psi^{*}A$ is Morita equivalent to $C_{0}(\uGo)$ via
$\X=\sa_{0}(\uGo,\XX)$ where $\XX=\coprod
\XX_{i}=\set{(i,u,h):\text{$u\in U_{i}$ and $h\in \X_{i}(u)$}}$.

There is an action $\alphau:\uG\to \Aut\uA$ of $\uG$ on $\uA$ defined
by
\begin{equation*}
  \alphau(k,x,l)=\bigl(k,r(x),\alpha_{x},s(x),l\bigr),
\end{equation*}
and it is straightforward to see that $\alphau$ is
continuous.\footnote{In fact $(\uA,\uG,\alphau)$ is a dynamical system
  whose class in the Brauer group $\operatorname{Br}(\uG)$ matches up
  with that of $(\A,G,\alpha)$ in $\operatorname{Br}(G)$ under the
  isomorphism of $\operatorname{Br}(G)$ with $\operatorname{Br}(\uG)$
  (see \cite{kmrw:ajm98}*{Theorem~4.1}).}

Let
\begin{equation*}
  \xymatrix{\uGo\times\T\ar[r]^{\iotau}&\Iso(\XX)\ar[r]^{\uj}&\Aut \uA},
\end{equation*}
be the short exact sequence coming from
Proposition~\ref{prop-new-j-map} applied to $\psi^{*}A$, and form the
pull-back
\[ \uE:=\set{(h,\gamma)\in\Iso(\XX)\times \uG: \uj(h)=\alphau(\gamma)}
\]
(equipped with the relative product topology) that completes the
commutative diagram
\begin{equation*}
  \xymatrix{\tE\ar[r]^{j} \ar[d]_{\pr_{1}} & \uG\ar[d]^{\alphau} \\
    \Iso(\XX)\ar[r]_{\uj}&\Aut\uA \;.}
\end{equation*}
A description of $\uE$ that is easier to work with is
%% \begin{multline*}
%%   \uE= \uj^{*}\uG=\{\,(k,x,V,l): \\ \text{$(k,x,l)\in\uG$,
%%   $(k,r(x),V,s(x),l)\in \Iso(\XX)$, and $\alpha_{x}=\Ad V$}\,\}.
%% \end{multline*}
\begin{multline*}
  \uE=\{\,(k,x,V,l): x\in G, r(x)\in U_k, s(x)\in U_l, \\
  \text{and $V\colon \X(s(x))\to \X(r(x))$ is a unitary with
    $\alpha_{x}=\Ad V$}\,\}
\end{multline*}
with multiplication $(k,x,V,l)(l,y,W,m)=(k,xy,VW,m)$.

Consideration of the commutative diagram that defines $\uE$ shows that
$j$ is an open, continuous surjection with kernel identified with
$\iota(\uGo\times\T)$ (for the obvious map $\iota$), and that $\tE$ is
a Hausdorff topological groupoid admitting a central $\T$-action.  In
fact, $\tE$ is a principal $\T$-bundle over the locally compact space
$\uG$.  Hence $\tE$ is a \emph{locally compact} $\T$-groupoid over
$\uG$.

\section{The Main Theorem}
\label{sec:main-theorem}

\begin{thm}
  \label{thm-main}
  Suppose that $G$ is a second countable locally compact Hausdorff
  groupoid with Haar system $\sset{\lambda^{u}}_{u\in\go}$ and that
  $(\A,G,\alpha)$ is a dynamical system with $A:=\sa_{0}(\go,\A)$
  continuous trace.  Let $\uE$ be the $\T$-groupoid over the
  equivalent groupoid $\uG$ constructed in the previous section.  Then
  $\A\rtimes_{\alpha}G$ is Morita equivalent to $\cs(\uG;\uE)$, and
  $\A\rtimes_{\alpha,r}G$ is Morita equivalent to $\cs_{r}(\uG,\uE)$.
\end{thm}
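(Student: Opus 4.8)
\emph{Sketch of the argument.} The plan is to prove the Morita equivalence in two stages, passing through the intermediate crossed product $\uA\rtimes_{\alpha}\uG$, and in each stage to exhibit a Fell-bundle equivalence and invoke the Equivalence Theorem for Fell bundles \cite{muhwil:dm08}. By Example~\ref{ex-cross-prod} we may identify $\A\rtimes_{\alpha}G$ and $\uA\rtimes_{\alpha}\uG$ with the Fell-bundle \cs-algebras $\cs(G,s^{*}\A)$ and $\cs(\uG,s^{*}\uA)$, and by Example~\ref{ex-twist-fell} we may identify $\cs(\uG;\uE)$ with $\cs(\uG,\CC)$, where $\CC$ is the line bundle over $\uG$ associated to $\uE$. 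It therefore suffices to exhibit, first, a Fell-bundle equivalence between $s^{*}\A$ over $G$ and $s^{*}\uA$ over $\uG$, and, second, a Fell-bundle equivalence between $s^{*}\uA$ and $\CC$, both over $\uG$; composing the two \ib s that the Equivalence Theorem returns then yields the maximal statement, and applying \cite{simwil:nyjm13}*{Theorem~14} to each of these equivalences yields the reduced statement, since by the same two examples the corresponding reduced algebras are $\A\rtimes_{\alpha,r}G$, $\uA\rtimes_{\alpha,r}\uG$ and $\cs_{r}(\uG;\uE)$.

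For the first equivalence, recall that $\uG=G[\U]$ is equivalent to $G$ via $Z=\coprod_{i}G_{U_{i}}$ and that $(\uA,\uG,\alphau)$ is precisely the pull-back of $(\A,G,\alpha)$ along $Z$, with $\psi^{*}A\cong\sa_{0}(\uGo,\uA)$ by \cite{raewil:tams85}*{Proposition~1.3}. Pulling $s^{*}\A$ back along the projection $Z\to G$ yields an upper semicontinuous Banach bundle over $Z$ carrying the evident left $s^{*}\A$- and right $s^{*}\uA$-module structures together with the evident $s^{*}\A$- and $s^{*}\uA$-valued inner products, and over each point of $Z$ it restricts to the standard imprimitivity bimodule implementing the relevant isomorphism of fibres, so it is a Fell-bundle equivalence in the sense of \cite{muhwil:dm08}. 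This is just the Fell-bundle refinement of the familiar Morita equivalence $\cs(G[\U])\sim\cs(G)$, and since everything in sight is a pull-back the requisite continuity is routine; the Equivalence Theorem then gives that $\uA\rtimes_{\alpha}\uG$ is Morita equivalent to $\A\rtimes_{\alpha}G$.

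For the second equivalence we use that $\uA\cong\K(\X)$ for the $\uA\sme C_{0}(\uGo)$-\ib\ $\X=\sa_{0}(\uGo,\XX)$, together with the concrete description $\uE=\set{(k,x,V,l)}$ of the twist. Let $\mathscr{M}\to\uG$ be the Banach bundle whose fibre over $\gamma=(k,x,l)$ is the conjugate Hilbert space $\overline{\X(r(x))}$. Equip $\mathscr{M}$ with a left $\CC$-action in which the class of $((k,x,V,l),\lambda)$ acts (up to the unavoidable conjugation) as $\lambda V$, a right $s^{*}\uA$-action obtained by transporting the natural $\K(\X(s(x)))$-module structure through $\alpha_{x}$, and the $\CC$-valued and $s^{*}\uA$-valued inner products built from the rank-one operators $\theta_{h,k}$ and the unitaries $V$ in the evident way. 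Over a unit of $\uG$ this data is exactly the standard $\C\sme\K(\X(u))$-\ib\ structure on $\overline{\X(u)}$, so $\mathscr{M}$ is a Fell-bundle equivalence between $\CC$ and $s^{*}\uA$, and the Equivalence Theorem gives that $\cs(\uG;\uE)$ is Morita equivalent to $\uA\rtimes_{\alpha}\uG$.

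The main obstacle is verifying that $\mathscr{M}$ genuinely meets the axioms of a Fell-bundle equivalence, and in particular that its two module actions and two inner products are \emph{continuous} for the bundle topology --- the point being that an element of $\uE$ over $\gamma$ records only a \emph{choice} of unitary $V$ with $\alpha_{x}=\Ad V$. This is exactly what the auxiliary groupoids $\Aut\uA$ and $\Iso(\XX)$ with their topologies (Propositions~\ref{prop-auta} and~\ref{prop-iso-top}), and the local continuous sections of $\uj$ furnished by Proposition~\ref{prop-new-j-map}, were designed to handle: near a point of $\uG$ one trivialises $\uE$ by such a section, whereupon every continuity claim collapses to continuity of the operations of $\Iso(\XX)$, while the algebraic identities --- associativity of the actions and the imprimitivity-bimodule relation linking the two inner products --- are a direct computation with the formulas above. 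The corresponding checks for the first equivalence are equally routine, and composing the two pre-\ib s, together with \cite{simwil:nyjm13}*{Theorem~14} on the reduced side, completes the proof.
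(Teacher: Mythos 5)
Your argument is correct and rests on the same machinery as the paper --- the Fell-bundle Equivalence Theorem of \cite{muhwil:dm08}, the identifications in Examples~\ref{ex-twist-fell} and~\ref{ex-cross-prod}, the topologies on $\Aut\uA$ and $\Iso(\XX)$ together with the local sections of $\uj$ for the continuity checks, and \cite{simwil:nyjm13}*{Theorem~14} on the reduced side --- but it factors the proof differently. You pass through the intermediate algebra $\uA\rtimes_{\alphau}\uG$, building two Fell-bundle equivalences (a pullback bundle over $Z=\coprod_{i}G_{U_{i}}$ linking $s^{*}\A$ to $s^{*}\uA$, then a bundle over $\uG$ itself linking $s^{*}\uA$ to $\CC$) and composing the resulting \ib s. The paper instead constructs a \emph{single} equivalence $\E=s^{*}\XX$ over the $(G,\uG)$-equivalence $Z$, directly between $\B=s^{*}\A$ over $G$ and $\CC$ over $\uG$, with fibre $E(i,x)\cong{}_{\alpha_{x}}\X(i,s(x))$ and the explicit actions and inner products \eqref{eq:9}--\eqref{eq:12}; that one bimodule is in effect the composition of your two. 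Your factorization matches the heuristic outline at the start of \S\ref{sec:phill-raeb-appr} and cleanly separates the ``groupoid equivalence'' step from the ``stabilization'' step, at the price of verifying the axioms of \cite{muhwil:dm08}*{Definition~6.1} twice and of supplying the (standard but not free) argument that the pullback bundle over $Z$ is an equivalence; the paper's route trades that for a single, longer direct verification. Two points deserve care in your version: first, your second bimodule carries $\CC$ on the left with fibres $\overline{\X(r(x))}$, the dual of the paper's convention, so when you compose you must keep the sides consistent; second, the ``unavoidable conjugation'' in your $\CC$-action is exactly the conjugate-bundle subtlety flagged in the footnote to Example~\ref{ex-twist-fell}, and the action should be written out explicitly to confirm that the algebra you obtain is $\cs(\uG;\uE)$ rather than the \cs-algebra of the conjugate twist.
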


We are going to realize both $\A\rtimes_{\alpha}G$ and $\cs(\uG;\uE)$
as Fell bundle \cs-algebras and then observe that the underlying Fell
bundles are equivalent (as in \cite{muhwil:dm08}*{Definition~6.1}).
Then Theorem~\ref{thm-main} will follow from the Fell Bundle
Equivalence Theorem \cite{muhwil:dm08}*{Theorem~6.4} and the
observation that the Morita equivalence descends to the reduced
algebras \cite{simwil:nyjm13}*{Theorem~14}.

Recall from Example~\ref{ex-twist-fell} that $\cs(\uG;\uE)$ is
isomorphic to $\cs(G,\CC)$ where $\CC$ is the line bundle
$\T\backslash (\uE,\C)$ where $z\cdot (\ue, \lambda)=(z
\ue,z\lambda)$.  Thus
\begin{equation}
  \label{eq:7}
  \CC=\set{[k,x,U,\lambda,l]:\text{$(k,x,U,l)\in\uE$, $\lambda\in\C$
      and $\Ad U=\alpha_{x}$}}, 
\end{equation}
and we have identified $(k,x,zU,\lambda,l)$ with $(i,x,U,\bar
z\lambda,j)$ for all $z\in \T$. In particular, the map $[k,x,U,
\lambda,l]\mapsto \lambda U^{*}$ is well defined on $\CC$.

On the other hand, as in Example~\ref{ex-cross-prod},
$\A\rtimes_{\alpha}G$ is isomorphic to $\cs(G,\B)$ where $\B=s^{*}\A$.

Let $Z=\coprod_{i}G_{U_{i}}$ be the $(G,\uG)$-equivalence from
\S\ref{sec:phill-raeb-appr}.  We'll show that
\begin{equation}
  \label{eq:8}
  \E=s^{*}\XX =\set{(i,x,h):\text{$s(x)\in U_{i}$ and $h\in
      \X_{i}$}} ,
\end{equation}
viewed as a bundle over $Z$, is the desired equivalence between $\B$
and $\CC$.

To start with, we need actions of $\B$ and $\CC$ on $\E$ satisfying
the properties (a), (b) and (c) laid out just prior to
\cite{muhwil:dm08}*{Definition~6.1}:\footnote{Sadly, condition~(c)
  there should read: $\|b\cdot e\|\le \|b\|\|e\|$ (and not $\|b\cdot
  e\| = \|b\|\|e\|$).\label{fn:4}} the left $\B$-action is given by
\begin{equation}
  \label{eq:9}
  (x,a)\cdot (i,y,h):= \bigl(i,xy,\alpha_{y}^{-1}(a)\cdot h\bigr),
\end{equation}
and the right $\CC$-action by
\begin{equation}
  \label{eq:10}
  (i,y,h)\cdot [i,z,U,\lambda,j] := (j,yz,\lambda U^{*}h).
\end{equation}
Continuity follows from the characterization of the topologies on
$\Aut\A$ and $\Iso(\XX)$ in \S\ref{sec:some-usef-topol} and
Lemma~\ref{lem-product}.

% \begin{comment}
%   \label{com-cty}
%   Checking that \eqref{eq:9} and \eqref{eq:10} are actions, as
%   defined prior to \cite{muhwil:dm08}*{Definition~6.1}, is
%   reasonably straightforward with the possible exception of
%   continuity.  But that should follow from the characterization of
%   the topologies on $\Aut\A$ and $\Iso(\XX)$ together with
%   Lemma~\ref{lem-product}.  Dana needs to double check that before
%   we just leave it to the reader.
% \end{comment}

Next we check that the axioms of \cite{muhwil:dm08}*{Definition~6.1}
hold.  To see that the actions commute we observe that on the one
hand,
\begin{align*}
  (x,a)\cdot \bigl((i,y,h)\cdot [i,z,U,\lambda,j]\bigr) &= (x,z)\cdot
  \bigl( j, yz, \lambda U^{*}h\bigr) \\
  &= (j,xyz,\alpha_{yz}^{-1}(a)\cdot \lambda U^{*}h).
\end{align*}
On the other hand,
\begin{align*}
  \bigl((x,a)\cdot (i,y,h)\bigr) \cdot [i,z,U,\lambda,j] & =
  \bigl((i,xy,\alpha_{y}^{-1}(a)\cdot h\bigr) \cdot  [i,z,U,\lambda,j] \\
  &= \bigl(j,xyz,\lambda U^{*}\alpha_{y}^{-1}(a)\cdot h\bigr) \\
  &= \bigl(j,xyz, \lambda
  \alpha_{z}^{-1}\bigl(\alpha_{y}^{-1}(a)\bigr)\cdot U^{*}h\bigr).
\end{align*}
Hence the actions do commute.

To define inner products, we proceed as follows.
\begin{equation}
  \label{eq:11}
  \blip\star<{(i,x,h)},{(i,y,k)}>= \bigl(xy^{-1},\alpha_{y}(\lip
  A(s(y))<h,k>)\bigr), 
\end{equation}
and
\begin{equation}
  \label{eq:12}
  \brip\star<{(i,y,k)},{(j,z,l)}>= \bigl[
  i,y^{-1}z,U,\rip\C<k,Ul>,j\bigr] ,
\end{equation}
where the definition of $\CC$ allows us to choose any unitary
$U:\X(i,s(z))\to\X(j,s(y))$ implementing~$\alpha_{y^{-1}z}$.

To see that the actions and inner products play nice together (as in
\cite{muhwil:dm08}*{Definition~6.1(c)(iv)}), we proceed as follows.
On the one hand,
\begin{align*}
  \blip\star<{(i,x,h)},{(i,y,k)}>\cdot (j,z,l) &=
  \bigl((xy^{-1},\alpha_{y} (\lip A(s(y))<h,k>) \bigr)\cdot (j,z,l) \\
  &=\bigl(j,xy^{-1}z,\alpha_{z}^{-1}\bigl(\alpha_{y}(\lip
  A(s(y))<h,k>)\bigr)
  \cdot l\bigr) \\
  &= \bigl(j,xy^{-1}z,\alpha_{z^{-1}y}(\lip A(s(y))<h,k>)\cdot l\bigr)
  .
\end{align*}
On the other hand,
\begin{align*}
  (i,x,h)\cdot \brip\star<{(i,y,k)},{(j,z,l)}> &= (i,x,h)\cdot \bigl[
  i, y^{-1}z ,U ,\rip\C<k,Ul>,j\bigr] \\
  &= \bigl(j,xy^{-1}z,\rip\C<k,Ul>U^{*}(h)\bigr) \\
  &= \bigl(j,xy^{-1}z,U^{*}\bigl(\rip\C<k,Ul>h\bigr)\bigr)\\
  &= \bigl((j,xy^{-1}z, U^{*}(\lip A(s(y)<h,k>\cdot Ul)\bigr) \\
  &= \bigl(j,xy^{-1}z,\alpha_{z^{-1}y}(\lip A(s(y))<h,k>)\cdot
  l\bigr).
\end{align*}
Thus
\begin{equation}
  \label{eq:13}
  \blip\star<{(i,x,h)},{(i,y,k}>\cdot (j,z,l) =  (i,x,h)\cdot
  \brip\star<{(i,y,k)},{(j,z,l)}> 
\end{equation}
as required.  Checking the rest of
\cite{muhwil:dm08}*{Definition~6.1(c)} is more straightforward.

It remains only to check that $E(i,x)$, equipped with the given
actions and inner products, is a $B(r(x))\sme C(i,s(x))$-\ib.  But
$C(i,s(x)) =\set{[i,s(x),zI,\lambda,i]}$ can be identified with $\C$
via the map $[i,s(x),zI,\lambda,i]\mapsto \bar z\lambda$, and
$B(r(x))$ with $A(r(x))$.  Then $E(i,x)$ is isomorphic to
${}_{\alpha_{x}}\X(i,x)$.\footnote{Recall that if $\mathsf{X}$ is an
  $A\sme B$-\ib, and if $\theta:A\to C$ is an isomorphism, then
  ${}_{\theta}\mathsf{X}$ is the $C\sme B$-\ib, where the left inner
  product is given by $\lip C<x,y>=\theta(\lip A<x,y>)$ and the left
  $C$-action is given by $c\cdot x:= \theta^{-1}(c)\cdot x$.  (Of
  course the right Hilbert-$B$ module structure is as
  before.)\label{fn:ib}}

Thus axioms (a), (b) and (c) of \cite{muhwil:dm08}*{Definition~6.1}
are satisfied.  Thus $\E$ is an equivalence between $\B$ and $\CC$.
This completes the proof of Theorem~\ref{thm-main}.

\section{A Partial Converse}
\label{sec:partial-converse}

% \begin{comment}
%   \label{com-done}
%   Erik found that this result was done explicitly elsewhere, but for
%   the life of me, I can't find the email from Erik or the proper
%   reference.  Obviously, it should go here.  We might also look to
%   ways to shorten the exposition in this section.
% \end{comment}

In this section, we briefly outline a proof of the following.
\begin{prop}
  \label{prop-converse}
  Suppose that $E$ is a $\T$-groupoid over a second countable locally
  compact groupoid $G$ with a Haar system
  $\sset{\lambda^{u}}_{u\in\go}$.  Then there is a Hilbert
  $C_{0}(\go)$-module $\X$ and a $G$ action $\alpha$ on $A=\K(\X)$ so
  that $A\rtimes_{\alpha}G$ is Morita equivalent to $\cs(G;E)$, and
  $A\rtimes_{\alpha,r}G$ is Morita equivalent to $\cs_{r}(G;E)$.
\end{prop}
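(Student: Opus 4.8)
The plan is to reverse the construction in Section~\ref{sec:phill-raeb-appr} in the special, easy case where the twist is given to us directly. First I would produce the Hilbert $C_{0}(\go)$-module: let $\CC$ be the line bundle over $G$ associated to $\eop$ as in Example~\ref{ex-twist-fell}, so that $\cs(G;E)\cong \cs(G,\CC)$. Restricting $\CC$ to the unit space $\go$ gives a line bundle $\CC_{0}:=\CC|_{\go}$ over $\go$; since $\go\times\T$ is the kernel of $j$, this restriction is the trivial complex line bundle, so in fact one can simply take $\X:=C_{0}(\go)$ with the trivial Hilbert module structure, so that $A:=\K(\X)\cong C_{0}(\go)$. (Alternatively, and more symmetrically, one realizes $\cs(G;E)$ itself as a crossed product: the point is that the Fell bundle $\CC$ over $G$ is exactly the kind of ``line bundle Fell bundle'' that is implemented by an action on rank-one compacts.)

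Next I would build the action. The twist $E$ carries a natural $G$-action on the line bundle $\CC_{0}$: for $x\in G$ and a fibre point in $\CC_{0}(s(x))$, left multiplication in the Fell bundle $\CC$ by the (one-dimensional) fibre $\CC(x)$ gives a linear isomorphism $\CC_{0}(s(x))\to \CC_{0}(r(x))$, well-defined up to the $\T$-ambiguity, which is precisely the data of a $\T$-valued cocycle. Concretely, choosing local sections of $j:E\to G$ over a cover $\sset{U_i}$ of $\go$ yields local trivializations in which this action is implemented by unitary multiplication operators $U_x$ with $\Ad U_x=\id$ on $C_0$ locally but whose global obstruction to being genuinely defined on $\X$ is the class of $E$. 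Thus one gets an action $\alpha$ of $G$ on $A=\K(\X)$ — more honestly, one should pass to a cover and take $\X$ to be $\sa_{0}(\go, \CC_{0}\otimes\ell^2)$ or simply $\sa_0(\go,\XX)$ for a suitable stable bundle $\XX$ so that $E$ embeds into $\Iso(\XX)$ via Proposition~\ref{prop-new-j-map}; then $\alpha$ is defined by $\alpha_x:=\Ad U$ for any $(r(x),U,s(x))$ lifting the image of $x$ in $\Aut\A$ under $j$, and continuity of $\alpha$ follows from the continuity and local sections provided by Proposition~\ref{prop-new-j-map} together with Proposition~\ref{prop-dy-sys}.

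Finally I would establish the Morita equivalence by running the Fell bundle argument of Section~\ref{sec:main-theorem} backwards. With $\B:=s^{*}\A$ realizing $A\rtimes_{\alpha}G\cong\cs(G,\B)$ (Example~\ref{ex-cross-prod}) and $\CC$ the line bundle realizing $\cs(G;E)\cong\cs(G,\CC)$ (Example~\ref{ex-twist-fell}), the bundle $\E:=s^{*}\XX$ over $G$ (now $G=\uG$, since no cover is needed), with the left $\B$-action and right $\CC$-action given by the formulas \eqref{eq:9} and \eqref{eq:10} and the inner products \eqref{eq:11} and \eqref{eq:12}, is an equivalence of Fell bundles in the sense of \cite{muhwil:dm08}*{Definition~6.1}; the verification is identical to the computation in Section~\ref{sec:main-theorem}. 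The Fell Bundle Equivalence Theorem \cite{muhwil:dm08}*{Theorem~6.4} then gives the Morita equivalence of $A\rtimes_{\alpha}G$ and $\cs(G;E)$, and \cite{simwil:nyjm13}*{Theorem~14} descends it to the reduced algebras.

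The main obstacle is purely a bookkeeping one: ensuring the action $\alpha$ is genuinely defined on a single Hilbert $C_{0}(\go)$-module $\X$ rather than only on a cover. This is where the nontriviality of the principal $\T$-bundle $E\to G$ over $\go$ could in principle obstruct things — but it does not, because the restriction of a twist to the unit space is always trivial (the kernel of $j$ is $\go\times\T$), so after stabilizing one genuinely gets $E\hookrightarrow\Iso(\XX)$ globally and $\alpha=\Ad U$ is well-defined on $\K(\X)$. Everything else is a routine transcription of the calculations already carried out in the proof of Theorem~\ref{thm-main}, which is why this converse is, as advertised, much easier.
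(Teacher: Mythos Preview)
Your construction of $\X$ and $\alpha$ does not work. Taking $\X=C_{0}(\go)$ gives $A=\K(\X)\cong C_{0}(\go)$ with one-dimensional fibres $A(u)\cong\C$; every $*$-automorphism of $\C$ is the identity, so any $G$-action $\alpha$ on this $A$ is fibrewise trivial (as you yourself note, $\Ad U_{x}=\id$), and the resulting crossed product is just $\cs(G)$, which is not Morita equivalent to $\cs(G;E)$ when $E$ is a nontrivial twist. The fallback suggestion --- stabilize and ``embed $E$ into $\Iso(\XX)$ via Proposition~\ref{prop-new-j-map}'' --- does not rescue this: that proposition produces a surjection $\uj:\Iso(\XX)\to\Aut\A$, not a mechanism for lifting a prescribed twist into $\Iso(\XX)$, and you have not specified any nontrivial map $G\to\Aut\A$ to lift (if the fibres of $\A$ are rank one, there isn't one).

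The missing idea, which the paper supplies, is a concrete $\X$ on which $E$ acts by right translation. One completes $C_{c}(G;E)$ to a Hilbert $C_{0}(\go)$-module under
\[
\rip C_{0}(\go)<f,g>(u)=\int_{G}\overline{f(e)}\,g(e)\,d\lambda_{u}(j(e)),
\]
so that $\X(u)$ is an $L^{2}$-type space over the fibre $G_{u}$ (with the built-in $\T$-equivariance), and defines $u(e):\X(s(e))\to\X(r(e))$ by $\bigl(u(e)f\bigr)(e')=f(e'e)$. Since $u(ze)=zu(e)$, the automorphism $\alpha_{j(e)}:=\Ad u(e)$ depends only on $j(e)\in G$, and this is the action. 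Once this $\X$ and $\alpha$ are in hand, your Fell-bundle equivalence argument (with $\E=s^{*}\XX$ over $G$, actions and inner products exactly as in \S\ref{sec:main-theorem}, and the appeal to \cite{muhwil:dm08}*{Theorem~6.4} and \cite{simwil:nyjm13}*{Theorem~14}) is precisely what the paper does and is correct.
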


Let $\CC$ be the Fell bundle associated to $E$ as in
Example~\ref{ex-twist-fell}.  Recall that $C_{c}(G;E)$ consists of
functions in $C_{c}(E)$ that satisfy $f(ze)= zf(e)$.  Then $\ccge$
becomes a pre-Hilbert $C_{0}(\go)$-module via the pre-inner product
\begin{equation*}
  \rip C_{0}(\go)<f,g> (u) =\int_{G} \overline{f(e)} g(e)
  \,d\lambda_{u}(j(e)) =\int_{G} \overline{f(e^{-1})} g(e^{-1})
  \,d\lambda^{u}(j(e)) .
\end{equation*}
The completion, $\X$, is the section algebra $\X=\sa_{0}(\go,\XX)$ of
a continuous Hilbert bundle $\XX$ over $\go$.  The fibre $\X(u)$ over
$u\in \go$ is the Hilbert space which is the completion of $\X_{0}(u)
=\set{\phi\in C_{c}(E): f(ze)=zf(e)}$ with respect to the pre-inner
product induced by $\rip C_{0}(\go)<\cdot,\cdot>$. (Of course, this is
a Hilbert space with an inner product conjugate linear in the first
variable.)  The algebra of generalized compacts,
$\K(\X)$, is Morita equivalent to $ C_{0}(\go)$.  Thus $\K(\X)$ is a
continuous-trace \cs-algebra with spectrum $\go$ and trivial
Dixmier-Douady class \cite{rw:morita}*{\S5.3}.  Furthermore,
$\K(\X)\cong \sa_{0}(\go, \KK)$ for a suitable (continuous) \cs-bundle
$\KK$ over $\go$.

For each $e\in E$, define a unitary $u(e):\X(s(e))\to\X(r(e))$ by
\begin{equation*}
  (u(e))(f)(e'):=f(e'e).
\end{equation*}
Since $u(ze)=z u(e)$, $\Ad u(e):\K(\X)(s(e))\to\K(\X)(r(e))$ depends
only on $j(e)\in G$, we get an action $\alpha=\sset{\alpha_{x}}_{x\in
  G}$ of $G$ on $\KK$ by
\begin{equation*}
  \alpha_{j(e)}(T)=\Ad u(e)\circ T.
\end{equation*}
(Just for the record, $\Ad u(e)(T)=u(e)Tu(e^{-1})$.)

It is not hard to see that this gives a continuous action of $G$ on
$\KK$.  Hence $(\KK,G,\alpha)$ is a groupoid dynamical system and we
can form the groupoid crossed product: $\KK\rtimes_{\alpha}G$.  As in
Example~\ref{ex-cross-prod}, this crossed product is isomorphic to the
Fell bundle \cs-algebra $\cs(G,\B)$ where $\B=s^{*}\KK=\set{(x,T)\in
  G\times \KK:T\in \K\bigl(\X(s(x))\bigr)}$, with
$(x,T)(y,S)=(xy,\alpha_{y}^{-1}(T)S)$ and
$(x,T)^{*}=(x^{-1},\alpha_{x}(T^{*}))$.

To prove Proposition~\ref{prop-converse}, we will treat $G$ as a
$(G,G)$-equivalence and show that $\E=s^{*}\XX=\set{(x,h)\in
  G\times\XX: h\in\X(s(x))}$, viewed as a bundle over $G$, is an
equivalence between $\B$ and $\CC$.  (This will suffice by the
Equivalence Theorem \cite{muhwil:dm08}*{Theorem~6.4} and
\cite{simwil:nyjm13}*{Theorem~14}.)

Now we proceed as in the previous section.  We need actions of $\B$
and $\CC$ on $\E$ satisfying the properties (a), (b)~and (c) laid out
just prior to \cite{muhwil:dm08}*{Definition~6.1}.
We define
\begin{equation*}
  (x,T)\cdot (y,h):= \bigl(xy,\alpha_{y}^{-1}(T)(h)\bigr) \quad\text{and} \quad
  (y,h)\cdot [e,\lambda]:= \bigl(yj(e),\lambda u(e^{-1})(h)\bigr)
\end{equation*}
for $(x,T)\in\B$, $(y,h)\in\E$ and $[e,\lambda]\in \CC$.\footnote{To
  see that the $\CC$-action is well-defined, keep in mind that
  $u(z\cdot e)=z u(e)$.}  The algebraic properties are easily checked,
and continuity is not so hard to check.

Then we need to check that the actions commute.  On the one hand,
\begin{align*}
  \bigl((x,T)\cdot (y,h)\bigr) \cdot [e,\lambda] &=
  \bigl(xy,\alpha_{y}^{-1}(T)(h)\bigr) \cdot [e,\lambda] \\
  &= \bigl(xyj(e),\lambda u(e^{-1})
  \bigl(\alpha_{y}^{-1}(T)(h)\bigr)\bigr) .
\end{align*}
On the other hand, keeping in mind that $\alpha_{j(e)}=\Ad u(e)$, we
have
\begin{align*}
  (x,T)\cdot \bigl((y,h)\cdot [e,\lambda]\bigr) &= (x,T)\cdot
  \bigl(yj(e), \lambda u(e^{-1})(h)\bigr) \\
  &= \bigl(xyj(e),\lambda
  \alpha_{yj(e)}^{-1}(T)\bigl(u(e^{-1})(h)\bigr)\bigr) \\
  &= \bigl(xyj(e), \lambda u(e^{-1})\alpha_{y}^{-1}(T)(h)\bigr).
\end{align*}
Thus we have verified Definition~6.1(a).

Next we define $\lip\B<\cdot,\cdot>$ on $\E*_{s}\E$ by
\begin{equation*}
  \blip\B<(x,h),(y,k)> = \bigl(xy^{-1}, \alpha_{y}(\theta_{h,k})\bigr)
\end{equation*}
where $\theta_{h,k}:\X(s(y))\to\X(s(y))$ is the rank-one operator
$\theta_{h,k}(l)=\rip\C<k,l>h$, where $\rip\C<\cdot,\cdot>=\rip
C_{0}(\go)<\cdot,\cdot>(u)$.  Note that if $y=j(e)$, then
$\alpha_{y}(\theta_{h,k})=\theta_{u(e)h,u(e)k}$.

We define $\rip\CC<\cdot,\cdot>$ on $\E*_{r}\E$ by
\begin{equation*}
  \brip\CC<(j(f),k),(j(g),l)> = \bigl[f^{-1}g, \brip C_{0}(\go)
  <u(f)k,u(g)l>(r(f)) \bigr].
\end{equation*}
After checking that the above is actually well-defined, it is not too
difficult to see that $\lip\B<\cdot,\cdot>$ and $\rip\CC<\cdot,\cdot>$
satisfy (i)--(iv) of part~(b) of the definition.  For example, to
verify~(iv), consider on the one hand:
\begin{align*}
  \blip\B<(x,h),(y,k)> \cdot (z,l) & = \bigl(xy^{-1},
  \alpha_{y}(\theta_{h,k})\bigr) \cdot (z,l) \\
  &= \bigl(xy^{-1}z, \alpha_{z^{-1}y}(\theta_{h,k})(l)\bigr).
\end{align*}
On the other hand, supposing that $j(f)=y$ and $j(g)=z$, we have
\begin{align*}
  (x,h)\cdot \brip\CC<(j(f),k),(j(g),l)> & = (x,h) \cdot \bigl[
  f^{-1}g, \brip C_{0}(\go)<u(f)k,u(g)l>(r(f)) \bigr] \\
  &= \bigl( xy^{-1}z, \brip C_{0}(\go)<u(f)k,u(g)l>(r(f))
  u(g^{-1}f)(h)
  \bigr ) \\
  &= \bigl ( xy^{-1}z, \brip C_{0}(\go) < u(g^{-1}f)k,l>(s(g))
  u(g^{-1}f)(h) \bigr ) \\
  &= \bigl(xy^{-1}z, \theta_{u(g^{-1}f)h,u(g^{-1}f)k}(l)\bigr) \\
  &= \bigl( xy^{-1}z, \alpha_{z^{-1}y}(\theta_{h,k})(l) \bigr).
\end{align*}
Thus (iv) holds.

We still need to verify part~(c) of the Definition; that is, we need
to see that
\begin{equation*}
  E(x)=\set{(x,h):h\in \X(s(x))}
\end{equation*}
is a $B(r(x))\sme C(s(x))$-\ib\ with respect to the inherited
operations. First note that $C(s(x))=\set{[(\iota(s(x),z),\lambda]}$
is easily identified with $\C $ via the map
$[(\iota(s(x),z),\lambda]\mapsto \bar z\lambda$.  On the other hand,
$B(r(x))$ is easily identified with $\K(\X(r(x)))$.  Then it is not
hard to check that $E(x)$ is the $\K(\X(r(x)))\sme \C$-\ib\ given by
${}_{\alpha_{x}}\X(s(x))$.\footnote{See footnote~\ref{fn:ib} at the
  end of \S\ref{sec:main-theorem}.}

Thus axioms (a), (b) and (c) of \cite{muhwil:dm08}*{Definition~6.1}
are satisfied and $\E$ is the desired equivalence.  This completes the
proof of Proposition~\ref{prop-converse}.

\section{Proof of Proposition~\ref{prop-auta}}
\label{sec:proof-proposition}

Recall that if $X$ is a set and $\rho$ is a collection of subsets of
$X$ than cover $X$, then the collection of finite intersections of
elements of $\rho$ form a basis for a topology $\tau$ on $X$.  In this
case, we say $\rho$ is a \emph{subbasis} for $\tau$.

If $a,b\in A$, $U$ and $V$ are open sets in $\go$ and $\epsilon>0$,
then let
\begin{equation*}
  \W(U,a,b,V,\epsilon)=\set{(u,\alpha,v):\text{$u\in U$, $v\in V$ and
      $\|\alpha(a(v))-b(u)\|<\epsilon$}} .
\end{equation*}
Of course, some of the $\W(U,a,b,V,\epsilon)$ might be empty, but if
$(u,\alpha,v)\in\Aut\A$, for any $a\in A$, there is a $b\in A$, such
that $b(v)=\alpha(a(u))$.  Then $(u,\alpha,v)\in \W(U,a,b,V,\epsilon)$
for any $U$, $V$ and $\epsilon$.  Hence the collection of all 
$\W(U,a,b,V,\epsilon)$
cover $\Aut\A$, and form
a subbasis for a topology $\tau$ on $\Aut\A$.

\begin{lemma}
  \label{lem-iff}
  We have $(u_{i},\alpha_{i},v_{i})\to (u,\alpha,v)$ in
  $(\Aut\A,\tau)$ if and only if \textup{(a)}, \textup{(b)} and \textup{(c)} of
  Proposition~\ref{prop-auta} hold.
\end{lemma}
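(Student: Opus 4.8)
The plan is to prove the two implications separately, in each case unwinding the definition of net convergence in the subbasic topology $\tau$ and leaning on the standard facts about upper semicontinuous \cs-bundles collected in \cite{wil:crossed}*{Appendix~C}: through every point of $\A$ there passes a continuous section vanishing at infinity; the norm $a\mapsto\|a\|$ is upper semicontinuous; the algebraic operations on $\A$ (in particular the fibrewise difference on $\A*_{\go}\A$) are continuous; and a net $c_{i}\to c$ in $\A$ exactly when $p(c_{i})\to p(c)$ and $\|c_{i}-f(p(c_{i}))\|\to 0$ for some (equivalently, any) continuous section $f$ with $f(p(c))=c$. Recall also that a net converges in a topology iff it is eventually in each \emph{subbasic} neighbourhood of the limit (being eventually in finitely many subbasic sets puts it eventually in their intersection). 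So everything reduces to analysing when $(u_{i},\alpha_{i},v_{i})$ eventually lies in a given $\W(U,a,b,V,\epsilon)$ containing $(u,\alpha,v)$.

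\emph{Conditions \textup{(a)}--\textup{(c)} from $\tau$-convergence.} For (a): given an open $U\ni u$, pick any $a\in A$ and then $b\in A$ with $b(u)=\alpha(a(v))$; then $(u,\alpha,v)\in\W(U,a,b,\go,1)$, so eventually $(u_{i},\alpha_{i},v_{i})\in\W(U,a,b,\go,1)$, forcing $u_{i}\in U$ eventually, i.e.\ $u_{i}\to u$. Condition (b) is symmetric, using $\W(\go,a,b,V,1)$. For (c), suppose $a_{i}\to a_{0}$ in $\A$ with $p(a_{i})=v_{i}$; then $v_{i}=p(a_{i})\to p(a_{0})$ while $v_{i}\to v$ by (b), so $p(a_{0})=v$ since $\go$ is Hausdorff. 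Choose continuous sections $a,b\in A$ with $a(v)=a_{0}$ and $b(u)=\alpha(a_{0})$. The subbasic sets only "see" $\alpha_{i}$ through its values on continuous sections, so I bridge the gap as follows: for $\epsilon>0$ the set $\W(\go,a,b,\go,\epsilon)$ contains $(u,\alpha,v)$ (the norm difference there is $0$), hence eventually $\|\alpha_{i}(a(v_{i}))-b(u_{i})\|<\epsilon$; on the other hand $(a_{i},a(v_{i}))\to(a_{0},a_{0})$ in $\A*_{\go}\A$, so $\|a_{i}-a(v_{i})\|\to 0$, and since each $\alpha_{i}$ is isometric $\|\alpha_{i}(a_{i})-\alpha_{i}(a(v_{i}))\|=\|a_{i}-a(v_{i})\|\to 0$. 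Combining, $\limsup_{i}\|\alpha_{i}(a_{i})-b(u_{i})\|\le\epsilon$ for every $\epsilon>0$, so $\|\alpha_{i}(a_{i})-b(u_{i})\|\to 0$; together with $u_{i}\to u$ and $b(u)=\alpha(a_{0})$ the convergence criterion gives $\alpha_{i}(a_{i})\to\alpha(a_{0})$ in $\A$.

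\emph{$\tau$-convergence from \textup{(a)}--\textup{(c)}.} Let $\W(U,a,b,V,\epsilon)$ be a subbasic neighbourhood of $(u,\alpha,v)$, so $u\in U$, $v\in V$ and $\|\alpha(a(v))-b(u)\|<\epsilon$. By (a) and (b), eventually $u_{i}\in U$ and $v_{i}\in V$. Since $a$ is a continuous section, $a(v_{i})\to a(v)$ with $p(a(v_{i}))=v_{i}$, so (c) gives $\alpha_{i}(a(v_{i}))\to\alpha(a(v))$ in $\A$; also $b(u_{i})\to b(u)$, and both of these nets lie over $u_{i}\to u$, so by continuity of the fibrewise difference $\alpha_{i}(a(v_{i}))-b(u_{i})\to\alpha(a(v))-b(u)$, whence upper semicontinuity of the norm yields $\limsup_{i}\|\alpha_{i}(a(v_{i}))-b(u_{i})\|\le\|\alpha(a(v))-b(u)\|<\epsilon$. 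So eventually $(u_{i},\alpha_{i},v_{i})\in\W(U,a,b,V,\epsilon)$, as needed.

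The only genuinely delicate step is the forward direction of (c): the neighbourhoods $\W(U,a,b,V,\epsilon)$ control $\alpha_{i}$ only on values of \emph{continuous} sections, whereas (c) is a statement about \emph{arbitrary} convergent nets $a_{i}$ in $\A$, so one has to interpolate through a continuous section passing through $a_{0}$ and use that the $\alpha_{i}$ are isometries. The remaining manipulations are routine bookkeeping with the convergence criterion, the upper semicontinuity of the norm, and the continuity of the bundle operations.
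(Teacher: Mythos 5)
Your proof is correct and follows essentially the same route as the paper's: both directions reduce convergence to eventual membership in subbasic sets $\W(U,a,b,V,\epsilon)$, and the key step for (c) is exactly the paper's interpolation through a continuous section $a$ with $a(v)=a_0$ combined with the isometry of the $\alpha_i$ and the standard convergence criterion for upper semicontinuous bundles (the paper cites \cite{wil:crossed}*{Proposition~C.20} where you argue directly with $\limsup$ and upper semicontinuity of the norm, but these are the same facts).
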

\begin{proof}
Suppose that $(u_{i},\alpha_{i},v_{i})\to (u,\alpha,v)$.
  Fix $a,b\in A$ such that $b(u)=\alpha(a(v))$.  Then for any
  $\epsilon>0$ and any neighborhoods $U$ of $u$ and $V$ of $v$, we
  have that $\W(U,a,b,V,\epsilon)$ is a neighborhood of
  $(u,\alpha,v)$.  Hence (a)~and (b) hold.

   There is an $i_{0}$ such that $i\ge i_{0}$ implies
  $(u_{i},\alpha_{i},v_{i})\in \W(U,a,b,V,\epsilon)$.  Hence if $i\ge
  i_{0}$, we have
  \begin{equation*}
    \|\alpha_{i}(a(v_{i}))-b(u_{i})\|<\epsilon.
  \end{equation*}
Since $b(u_{i})\to b(u)$ in $\A$ and $\epsilon>0$ is arbitrary,
\cite{wil:crossed}*{Proposition~C.20} implies that
$\alpha_{i}(a(v_{i}))\to \alpha(a(v))$.  Now if $a_{i}\to a$ in $\A$
with $p(a_{i})=v_{i}$, let $a\in A$ be such that $a(v)=a_{0}$.  Then
by the above, $\alpha_{i}(a(v_{i}))\to \alpha(a_{0})$ in $\A$, while
\begin{equation*}
  \|\alpha_{i}(a_{i})-\alpha_{i}(a(v_{i}))\|=\|a_{i}-a(v_{i})\|\to 0,
\end{equation*}
since isomorphisms are isometric and $a$ is a section.  It follows
from another application of \cite{wil:crossed}*{Proposition~C.20} that
$\alpha_{i}(a_{i})\to\alpha(a_{0})$.  This proves (c).

Conversely, suppose that $\sset{(u_{i},\alpha_{i},v_{i})}$ is a net in
$\Aut\A$ satisfying (a), (b)~and (c).  Let $\W(U,a,b,V,\epsilon)$ be a
subbasic open neighborhood of $(u,\alpha,v)$.  It will suffice to see
that $\sset{(u_{i},\alpha_{i},v_{i})}$ is eventually in
$\W(U,a,b,V,\epsilon)$.  Since (a)~and (b) hold, we can assume that
$u_{i}\in U$ and $v_{i}\in V$.  Moreover,
\begin{equation*}
  \|\alpha(a(v))-b(u)\|<\epsilon.
\end{equation*}
But $\set{a\in\A:\|a\|<\epsilon}$ is open and
\begin{equation*}
  \alpha_{i}(a(v_{i}))-b(u_{i})\to \alpha(a(v))-b(u)
\end{equation*}
in view of assumption~(c).  Thus we eventually have
\begin{equation*}
  \|\alpha_{i}(a(v_{i}))-b(u_{i})\|<\epsilon.
\end{equation*}
That is, $\sset{(u_{i},\alpha_{i},v_{i})}$ is eventually in
$\W(U,a,b,V,\epsilon)$ as required.
\end{proof}

\begin{remark}
  \label{rem-point-norm}
  It follows immediately that the relative topology on $\Aut A(u)$ is
the so-called ``point-norm'' topology (see
\cite{rw:morita}*{Lemma~7.18}).
\end{remark}
  
More generally, we have the following
immediate corollary.
\begin{cor}
  \label{cor-fibre-top}
  A net $\sset{(u,\alpha_{i},v)}$ converges to $(u,\alpha,v)$ in
  $\Aut\A$ if and only if $\alpha_{i}(b)\to \alpha(b)$ for all $b\in
  A(v)$. 
\end{cor}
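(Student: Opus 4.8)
The plan is to read this off directly from Lemma~\ref{lem-iff}. Because the net $\{(u,\alpha_i,v)\}$ is constant in its first and last coordinates, conditions~(a) and~(b) of Proposition~\ref{prop-auta} hold automatically, so by Lemma~\ref{lem-iff} convergence $(u,\alpha_i,v)\to(u,\alpha,v)$ in $\Aut\A$ is equivalent to condition~(c) alone: whenever $a_i\to a_0$ in $\A$ with $p(a_i)=v$, one has $\alpha_i(a_i)\to\alpha(a_0)$.

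It therefore remains only to see that, for a net of $*$-isomorphisms $\alpha_i\colon A(v)\to A(u)$, this condition is equivalent to the pointwise statement $\alpha_i(b)\to\alpha(b)$ for every $b\in A(v)$. The forward implication is immediate: apply~(c) to the constant net $a_i=b$. For the converse, assume $\alpha_i(b)\to\alpha(b)$ for all $b\in A(v)$ and let $a_i\to a_0$ with $p(a_i)=v$; then $p(a_0)=v$, and since $a_i-a_0\to 0$ in $\A$ and the norm is upper semicontinuous we get $\|a_i-a_0\|\to 0$. As each $\alpha_i$ is isometric, $\|\alpha_i(a_i)-\alpha_i(a_0)\|=\|a_i-a_0\|\to 0$; combining this with $\alpha_i(a_0)\to\alpha(a_0)$ through \cite{wil:crossed}*{Proposition~C.20} gives $\alpha_i(a_i)\to\alpha(a_0)$, which is~(c). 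This is exactly the passage — from convergence of a fixed element under the $\alpha_i$ to convergence of a varying net of elements — used at the end of the proof of Lemma~\ref{lem-iff}.

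I do not expect any real obstacle: the only non-formal ingredient is the bundle-convergence criterion of Proposition~C.20 together with the isometry of the $\alpha_i$, precisely as in the lemma, which is why the statement is billed as an immediate corollary. The one point to handle with a little care is not to conflate convergence in the fibre $A(v)$ with bundle convergence in $\A$; upper semicontinuity of the norm takes care of that.
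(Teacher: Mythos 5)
Your proof is correct and follows exactly the route the paper intends: the paper offers no written argument, calling the corollary ``immediate'' from Lemma~\ref{lem-iff}, and your write-up supplies precisely the missing details --- (a) and (b) are automatic for a net constant in the outer coordinates, and condition (c) reduces to pointwise convergence via the isometry of the $\alpha_i$, upper semicontinuity of the norm, and \cite{wil:crossed}*{Proposition~C.20}, just as in the tail end of the proof of Lemma~\ref{lem-iff}. No gaps.
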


\begin{lemma}
  \label{lem-t2}
  The topology $\tau$ on $\Aut\A$ is Hausdorff.
\end{lemma}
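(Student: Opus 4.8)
The plan is to read Hausdorffness of $(\Aut\A,\tau)$ off the net description of $\tau$ obtained in Lemma~\ref{lem-iff}, rather than wrestle with the subbasic sets $\W(U,a,b,V,\epsilon)$ by hand. I will use the standard fact that a topological space is Hausdorff precisely when no net in it has two distinct limits \cite{kel:general}*{Chap.~2}. So suppose a net $\sset{(u_{i},\alpha_{i},v_{i})}$ in $\Aut\A$ converges both to $(u,\alpha,v)$ and to $(u',\alpha',v')$; the goal is to show these points agree.

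The base coordinates are immediate: by Lemma~\ref{lem-iff}, parts~(a) and~(b) of Proposition~\ref{prop-auta} hold for each limit, so $u_{i}\to u$, $u_{i}\to u'$, $v_{i}\to v$ and $v_{i}\to v'$ in $\go$, and since $\go$ is Hausdorff, $u=u'$ and $v=v'$. The one substantive point is to prove $\alpha=\alpha'$. For this I would fix $b\in A(v)$, choose a section $a\in A=\sa_{0}(\go,\A)$ with $a(v)=b$ — evaluation at $v$ carries $A$ onto the fibre $A(v)$ (cf.\ \cite{wil:crossed}*{Appendix~C}), and this is exactly the device already used in the proof of Lemma~\ref{lem-iff} — note that $a(v_{i})\to a(v)=b$ in $\A$ with $p(a(v_{i}))=v_{i}$, and then apply part~(c) of Proposition~\ref{prop-auta} to the net $\sset{a(v_{i})}$ for \emph{both} limits. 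This gives $\alpha_{i}(a(v_{i}))\to\alpha(b)$ and $\alpha_{i}(a(v_{i}))\to\alpha'(b)$ in $\A$, hence $\alpha(b)=\alpha'(b)$; since $b\in A(v)$ was arbitrary, $\alpha=\alpha'$, and the two limits coincide.

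The step I expect to be the crux is the last inference $\alpha(b)=\alpha'(b)$: it uses that limits of nets in the total space $\A$ are unique, i.e.\ that $\A$ itself is Hausdorff, which I would take as part of the standing conventions on upper semicontinuous \cs-bundles (see \cite{wil:crossed}*{Appendix~C}); in the situation of Theorem~\ref{thm-main} the fibrewise norm is moreover continuous, so this is automatic there. If one wished to avoid nets altogether in the continuous-norm case, the direct route also works: points with distinct first (resp.\ second) coordinate are separated by $\W(U_{1},0,0,\go,1)$ and $\W(U_{2},0,0,\go,1)$ for disjoint neighbourhoods $U_{1}\ni u_{1}$, $U_{2}\ni u_{2}$ (resp.\ the analogous sets in the $V$-slot), since these equal the preimages of $U_{1},U_{2}$ under $(u,\alpha,v)\mapsto u$; and for $(u,\alpha_{1},v)\neq(u,\alpha_{2},v)$ one picks $c\in A(v)$ with $\delta:=\tfrac12\|\alpha_{1}(c)-\alpha_{2}(c)\|>0$, a section $a$ with $a(v)=c$, sections $b_{k}\in A$ with $b_{k}(u)=\alpha_{k}(c)$, and (by continuity of the norm) an open $W\ni u$ with $\|(b_{1}-b_{2})(w)\|>\tfrac32\delta$ for $w\in W$, whereupon $\W(W,a,b_{1},\go,\tfrac{\delta}{2})\cap\W(W,0,0,\go,1)$ and $\W(W,a,b_{2},\go,\tfrac{\delta}{2})\cap\W(W,0,0,\go,1)$ are disjoint neighbourhoods of the two points.
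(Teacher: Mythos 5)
Both of your arguments are sound under the hypotheses you attach to them, and your ``direct route'' is essentially the paper's own proof written out in full: the paper makes the same reduction (points with distinct base coordinates are separated using the $U$- and $V$-slots of the subbasic sets, so only $(u,\alpha,v)$ versus $(u,\beta,v)$ with $\alpha\neq\beta$ needs attention) and then disposes of the remaining case with a one-line appeal to Corollary~\ref{cor-fibre-top}. You have put your finger on exactly the right crux. The trouble is that the hypothesis you fall back on there is precisely what the paper disclaims: Remark~\ref{rem-eveninusc}, immediately following the lemma, insists that $\A$ is \emph{not} assumed to be a continuous bundle and need not be Hausdorff. Your net argument needs uniqueness of limits in $\A$; your direct argument needs $w\mapsto\|(b_{1}-b_{2})(w)\|$ to be \emph{lower} semicontinuous so that $\set{w:\|(b_{1}-b_{2})(w)\|>\tfrac{3}{2}\delta}$ is a neighborhood of $u$, and for a merely upper semicontinuous bundle it need not be. (The paper's appeal to Corollary~\ref{cor-fibre-top} fares no better: that corollary only gives uniqueness of limits for nets with \emph{constant} base points, i.e.\ Hausdorffness of the relative topology on each fibre of $\Aut\A$, which does not separate $(u,\alpha,v)$ from $(u,\beta,v)$ inside the whole space.)

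In fact no argument can close this gap in the stated generality. Take $\go=\{0\}\cup\{1/n : n=1,2,\dots\}$ with its usual compact Hausdorff topology, and let $A$ be the $C(\go)$-algebra of all bounded functions $f\colon\go\to M_{2}(\C)$ with $\lim_{n}f(1/n)=0$, with $C(\go)$ acting by pointwise multiplication. Every fibre of the associated upper semicontinuous bundle $\A$ is $M_{2}(\C)$, every section $a\in A=\sa_{0}(\go,\A)$ satisfies $\|a(1/n)\|\to 0$, and the bundle is not continuous. If $\W(U,a,b,V,\epsilon)$ is any subbasic set containing a point of the form $(0,\alpha,0)$, then $U$ and $V$ are open neighborhoods of $0$, so eventually $1/n\in U\cap V$ and $\|a(1/n)-b(1/n)\|\le\|a(1/n)\|+\|b(1/n)\|<\epsilon$; hence the net $\sset{(1/n,\id,1/n)}$ converges in $\tau$ to $(0,\alpha,0)$ for \emph{every} automorphism $\alpha$ of $M_{2}(\C)$, and $(\Aut\A,\tau)$ is not Hausdorff. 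So the lemma really does require the extra hypothesis you supplied (that $\A$ be a continuous, hence Hausdorff, bundle with continuous norm). That hypothesis holds everywhere the lemma is actually used in the paper, since there $A$ has continuous trace and $\A$, $\psi^{*}\A$ are genuinely continuous bundles; but neither your proof nor the paper's establishes the statement as written, and yours at least has the virtue of saying so.
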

\begin{remark}
  \label{rem-eveninusc}
  At this point, we are \emph{not} assuming that $\A$ is a continuous
  bundle.  Hence the topology on $\A$ need not be Hausdorff in general
  \cite{wil:crossed}*{Example~C.27}. 
\end{remark}

\begin{proof}
  The existence of neighborhoods $\W(U,a,b,V,\epsilon)$ of
  $(u,\alpha,v)$ with $U$ and $V$ arbitrary implies that we only have
  to show that we can separate $(u,\alpha,v)$ and $(u,\beta,v)$ with
  $\alpha\not=\beta$.  But this is not hard in view of
  Corollary~\ref{cor-fibre-top}. 
\end{proof}

\begin{proof}[Proof of Proposition~\ref{prop-auta}]
  Since $\tau$ is Hausdorff by Lemma~\ref{lem-t2}, we just need to see
  that the groupoid operations are continuous.  But since
  $*$-isomorphisms are isometric, it follows that $(u,\alpha,v)\in
  \W(U,a,b,V,\epsilon)$ if and only if $(v,\alpha^{-1},u)\in
  \W(V,b,a,U,\epsilon)$.  Hence, inversion is continuous.

  To see that multiplication is continuous, consider nets
  \begin{equation*}
    (u_{i},\alpha_{i},v_{i})\to (u,\alpha,v)\quad\text{and} \quad
    (v_{i},\beta_{i},w_{i})\to (v,\beta,w).
  \end{equation*}
We just need to see that $(u_{i},\alpha_{i}\circ\beta_{i},w_{i})\to
(u,\alpha\circ\beta, w)$.  But if $a_{i}\to a_{0}$ in $\A$ with
$p(a_{i})=u_{i}$, then Lemma~\ref{lem-iff} implies that
$\beta_{i}(a_{i})\to \beta(a_{0})$ and hence that
$\alpha_{i}(\beta_{i}(a_{i}))\to \alpha(\beta(a_{0}))$.  By
Lemma~\ref{lem-iff}, this suffices.
\end{proof}

\section{Proof of Proposition~\ref{prop-new-j-map}}
\label{sec:proof-prop-refpr}

Recall that $\X(u)$ is an $A(u)\sme \C$-\ib\ with
\begin{equation*}
  \lip
A(u)<a(u),b(u)>=\lip A<a,b>(u)\quad\text{for all $a,b\in A$.}
\end{equation*}
\begin{lemma}
  \label{lem-j-cts}
  The map $\uj:\Iso(\XX)\to\Aut\A$ is continuous.
\end{lemma}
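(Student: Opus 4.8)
The plan is to work directly with the net-convergence criteria supplied by Propositions~\ref{prop-auta} and~\ref{prop-iso-top}. So I would take a net $\sset{(u_{i},U_{i},v_{i})}$ in $\Iso(\XX)$ converging to $(u,U,v)$ and show that $\sset{(u_{i},\Ad U_{i},v_{i})}$ converges to $(u,\Ad U,v)$ in $\Aut\A$. Conditions (a)~and (b) of Proposition~\ref{prop-auta} are immediate, so everything comes down to verifying (c): if $a_{i}\to a_{0}$ in $\A$ with $p(a_{i})=v_{i}$, then $\Ad U_{i}(a_{i})\to\Ad U(a_{0})$ in $\A$. Here I would use that $\Ad U$ is isometric and sends $\theta_{h,k}$ to $\theta_{Uh,Uk}$.

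Since the only tool for detecting convergence in $\A$ is comparison with continuous sections (via \cite{wil:crossed}*{Proposition~C.20}), the idea is to approximate. Fix $\epsilon>0$ and choose a finite-rank operator $b=\sum_{m=1}^{n}\theta_{h^{(m)},k^{(m)}}\in\K(\X(v))=A(v)$ with $\|a_{0}-b\|<\epsilon$. Lift the vectors $h^{(m)},k^{(m)}\in\X(v)$ to sections $\xi^{(m)},\eta^{(m)}\in\X=\sa_{0}(\go,\XX)$, and lift $Uh^{(m)},Uk^{(m)}\in\X(u)$ to sections $\tilde\xi^{(m)},\tilde\eta^{(m)}\in\X$. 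Then $\beta:=\sum_{m}\lip A<\xi^{(m)},\eta^{(m)}>$ and $g:=\sum_{m}\lip A<\tilde\xi^{(m)},\tilde\eta^{(m)}>$ are continuous sections of $\A$ with $\beta(v)=b$ and $g(u)=\sum_{m}\theta_{Uh^{(m)},Uk^{(m)}}=\Ad U(b)$; setting $b_{i}:=\beta(v_{i})$ gives $\Ad U_{i}(b_{i})=\sum_{m}\theta_{U_{i}\xi^{(m)}(v_{i}),U_{i}\eta^{(m)}(v_{i})}$.

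The reason for introducing these sections is that the convergent nets $\xi^{(m)}(v_{i})\to\xi^{(m)}(v)=h^{(m)}$ in $\XX$ may be fed into condition (c) of Proposition~\ref{prop-iso-top}, producing $U_{i}\xi^{(m)}(v_{i})\to Uh^{(m)}$ in $\XX$, and likewise for $\eta^{(m)}$. Since also $\tilde\xi^{(m)}(u_{i})\to\tilde\xi^{(m)}(u)=Uh^{(m)}$ by continuity of the section, the differences $U_{i}\xi^{(m)}(v_{i})-\tilde\xi^{(m)}(u_{i})$ tend to $0$ in $\XX$, hence have norm tending to $0$, while $\|U_{i}\xi^{(m)}(v_{i})\|=\|\xi^{(m)}(v_{i})\|$ and $\|\tilde\eta^{(m)}(u_{i})\|$ remain bounded. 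Estimating each summand by $\|\theta_{h,k}-\theta_{h',k'}\|\le\|h-h'\|\,\|k\|+\|h'\|\,\|k-k'\|$ then gives $\|\Ad U_{i}(b_{i})-g(u_{i})\|\to0$. Combining this with $\|\Ad U_{i}(a_{i})-\Ad U_{i}(b_{i})\|=\|a_{i}-b_{i}\|$ and the fact that $a_{i}-b_{i}\to a_{0}-b$ in $\A$ (so $\limsup_{i}\|a_{i}-b_{i}\|\le\|a_{0}-b\|<\epsilon$ by upper semicontinuity of the norm), I obtain $\limsup_{i}\|\Ad U_{i}(a_{i})-g(u_{i})\|\le\epsilon$, while $\|\Ad U(a_{0})-g(u)\|=\|a_{0}-b\|<\epsilon$. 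As $\epsilon>0$ is arbitrary, \cite{wil:crossed}*{Proposition~C.20} yields $\Ad U_{i}(a_{i})\to\Ad U(a_{0})$, which is condition (c).

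I expect the main obstacle to be bookkeeping rather than anything conceptual: one must repeatedly move between bare net-convergence in $\XX$ and $\A$ and the section-based criteria, and the topology on $\Iso(\XX)$ only controls $V_{i}h_{i}$ for honest nets $h_{i}$ in $\XX$, which is precisely why the auxiliary sections $\xi^{(m)},\eta^{(m)},\tilde\xi^{(m)},\tilde\eta^{(m)}$ are needed. One small point to watch is that $A=\K(\X)$ need not be a \emph{continuous} bundle in this generality, so only upper semicontinuity of the norm on $\A$ is available; this is harmless because every estimate above is one-sided, and it is only the final appeal to Proposition~C.20, with $\epsilon$ allowed to shrink, that assembles these into honest convergence.
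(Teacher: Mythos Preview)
Your proposal is correct and follows essentially the same line as the paper's proof: approximate $a_{0}$ by a finite sum of rank-one operators built from sections of $\XX$, use that $\Ad U$ is isometric and sends $\theta_{h,k}$ to $\theta_{Uh,Uk}$, feed the section values into condition~(c) of Proposition~\ref{prop-iso-top}, and conclude via \cite{wil:crossed}*{Proposition~C.20}. The only cosmetic difference is that the paper asserts the convergence $\sum_{j}\lip A(u_{i})<V_{i}c_{j}(v_{i}),V_{i}d_{j}(v_{i})>\to\sum_{j}\lip A(u)<Vc_{j}(v),Vd_{j}(v)>$ in $\A$ directly, whereas you make this step explicit by introducing the auxiliary target sections $\tilde\xi^{(m)},\tilde\eta^{(m)}$ and the rank-one norm estimate---a harmless and arguably more self-contained variant.
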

\begin{proof}
  Suppose that $(u_{i},V _{i},v_{i})\to (u,V,v)$ in $\Iso(\XX)$ and
  that $a_{i}\to a_{0}$ in $\A$ with $p(a_{i})=v_{i}$.  It will
  suffice to see that $(\Ad V_{i})(a_{i})\to (\Ad V)(a_{0})$ in $\A$.

  Given $\epsilon>0$, there are elements $c_{j},d_{j}\in A$ such that
  \begin{equation*}
    \bigl\|\sum_{j=1}^{n} \lip A(v)<c_{j}(v),d_{j}(v)> -a_{0}\bigr\|<\epsilon.
  \end{equation*}
  Since $\Ad V\bigl(\lip A(v)<c,d>\bigr)=\lip A(u)<Vc,Vd>$ and since
  $\Ad V$ is isometric,
  \begin{equation*}
    \bigl\|\sum_{j} \lip A(u)<Vc_{j}(v), Vd_{j}(v)>-\Ad
    V(a_{0})\bigr\|<\epsilon. 
  \end{equation*}
  But $\bigl(\sum_{j} \lip A(v_{i})<c_{j}(v_{i}),d_{j}(v_{i})> -a_{i
  }\bigr)$ converges to $\bigl(\sum_{j} \lip A(v)<c_{j}(v),d_{j}(v)>
  -a_{0}\bigr)$ in $\A$.  Hence we eventually have
  \begin{equation*}
    \bigl\|\sum_{j}\lip
    A(v_{i})<c_{j}(v_{i}),d_{j}(v_{i})>-a_{i}\bigr\|<\epsilon. 
  \end{equation*}
  As above, this means we eventually have
  \begin{equation*}
    \bigl\|\sum_{j} \lip A(u_{i})<V_{i}c_{j}(v_{i}), V_{i}d_{j}(v_{i}) >
    -\Ad V_{i}(a_{i}) \bigr\|<\epsilon.
  \end{equation*}
  Since we certainly have $\sum_{j}\lip
  A(u_{i})<V_{i}c_{j}(v_{i}),V_{i}d_{j}(v_{i})>\to \sum_{j}\lip A(u)
  <Vc_{j}(v),Vd_{j}(v)>$ by assumption,
  \cite{wil:crossed}*{Proposition~C.20} implies that $\Ad
  V_{i}(a_{i})\to \Ad V(a_{0})$ as required.
\end{proof}

The next result is classical.  We sketch the proof as the construction
will be required in the proof of the proposition.
\begin{lemma}
  \label{lem-prop-1.6}
  Given a $*$-isomorphism $\alpha:A(v)\to A(u)$, there is a unitary
  $U:\X(v)\to\X(u)$, determined up to a unimodular scalar, such that
  $\alpha=\Ad U$.  Consequently, $\uj$ is surjective with kernel the
  image of $\iotau$.
\end{lemma}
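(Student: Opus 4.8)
The plan is to reduce everything to the elementary case and patch the pieces together. Recall that $\X(v)$ is an $A(v)\sme\C$-imprimitivity bimodule, so $A(v)\cong\K(\X(v))$. Working fibrewise, I would first dispose of the uniqueness assertion: if $U,U'\colon\X(v)\to\X(u)$ are both unitaries implementing $\alpha$, then $U^{*}U'$ is a unitary on $\X(v)$ commuting with every rank-one operator $\theta_{h,k}$, hence with all of $\K(\X(v))$, and therefore $U^{*}U'$ lies in the centre of $M(\K(\X(v)))=\mathcal L(\X(v))$, which is $\C\cdot I$; so $U'=zU$ for some $z\in\T$. This gives the ``determined up to a unimodular scalar'' clause.

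For existence, the key step is the classical fact (this is essentially \cite{rw:morita}*{Lemma~7.18} and the surrounding discussion) that an isomorphism $\alpha\colon\K(\X(v))\to\K(\X(u))$ between algebras of compact operators on Hilbert spaces is always spatially implemented. Concretely, fix a unit vector $e\in\X(v)$, so that $\theta_{e,e}$ is a rank-one projection; then $\alpha(\theta_{e,e})$ is a rank-one projection in $\K(\X(u))$, say $\alpha(\theta_{e,e})=\theta_{f,f}$ for a unit vector $f\in\X(u)$. One then defines $U\colon\X(v)\to\X(u)$ by $U(\theta_{h,e}e)\mapsto \alpha(\theta_{h,e})f$; since every $h\in\X(v)$ can be written as $h=\theta_{h,e}e$ (because $\rip\C<e,e>=1$), this is defined on all of $\X(v)$. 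A short computation using $\theta_{h,e}^{*}\theta_{k,e}=\rip\C<h,k>\theta_{e,e}$ shows $U$ is isometric, and surjectivity follows by running the same construction with $\alpha^{-1}$. Finally one checks $\alpha(\theta_{h,k})=\theta_{Uh,Uk}$ for all $h,k$, and since such rank-ones span a dense subspace and $\alpha$, $\Ad U$ are both $*$-homomorphisms that are continuous, $\alpha=\Ad U$.

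The consequence for $\uj$ is then immediate. Surjectivity of $\uj\colon\Iso(\XX)\to\Aut\A$ is exactly the existence half: given $(u,\alpha,v)\in\Aut\A$, the bundle structure identifies $A(v)=\K(\X(v))$ and $A(u)=\K(\X(u))$, so the lemma produces a unitary $U\colon\X(v)\to\X(u)$ with $\alpha=\Ad U$, i.e. $\uj(u,U,v)=(u,\alpha,v)$. For the kernel: $\uj(u,U,v)=(u,\id,v)$ forces $u=v$ and $\Ad U=\id$ on $A(u)=\K(\X(u))$, and the uniqueness clause (comparing $U$ with $I_{\X(u)}$) gives $U=zI_{\X(u)}$ for some $z\in\T$, which is precisely $(u,U,u)=\iotau(u,z)$; conversely every $\iotau(u,z)$ clearly maps to a unit.

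The main obstacle is not conceptual but bookkeeping: one must be careful that the identification $A(u)\cong\K(\X(u))$ from \S\ref{sec:some-usef-topol} matches $\lip A(u)<h,k>$ with $\theta_{h,k}$ (which the text has already arranged), and one must handle the possibility that $\X(v)$ or $\X(u)$ is the zero module or not separable — but since $A$ is separable and continuous trace, the fibres are separable Hilbert spaces and the nonzero case is the only one that matters. No topology is involved in this lemma, so the delicate continuity and openness statements of Proposition~\ref{prop-new-j-map} are deferred; here we only need the purely algebraic spatial-implementation result.
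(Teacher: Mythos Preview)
Your proposal is correct and follows essentially the same construction as the paper: both fix a unit vector $e\in\X(v)$, take a unit vector $f$ in the range of $\alpha(\theta_{e,e})$, and define $U$ by $Uh=\alpha(\lip A(v)<h,e>)\cdot f$ (your formula $\alpha(\theta_{h,e})f$ is identical under the identification $\theta_{h,k}=\lip A(v)<h,k>$). The paper cites \cite{rw:morita}*{Proposition~1.6} rather than Lemma~7.18 and only sketches existence, whereas you also spell out the uniqueness and kernel arguments, but the approach is the same.
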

\begin{proof}[Sketch of the Proof]
  The only nontrivial bit is the construction of $U$, and for this we
  follow the proof of \cite{rw:morita}*{Proposition~1.6}.

  Let $e$ be a unit vector in $\X(v)$.  Then $p:=\lip A(v)<e,e>$ is a
  minimal projection in $A(v)$.  Hence $\alpha(p)$ is a minimal
  projection in $A(u)$ and is of the form $\lip A(u)<f,f>$ for any
  unit vector $f$ in the range of $\alpha(p)$.  Then we define
  $U:\X(v)\to \X(u)$ by
  \begin{equation}
    \label{eq:4}
    Uh:=\alpha(\lip A(v)<h,e>)\cdot f.
  \end{equation}
  A computation show that $U$ preserves inner products and if $g\in
  \X(u)$ we have $Uh=g$ with $h=\alpha^{-1}(\lip A(u)<g,f>)e$.  Hence
  $U$ is a unitary.  Another computation shows that $\alpha(T)\cdot
  Uh=U(T\cdot h)$.  Hence $\alpha=\Ad U$ as required.
\end{proof}

By \cite{wil:crossed}*{Proposition~1.15}, the openness of $\uj$ will
follow from the cross section assertion.  Since $\iotau$ is clearly a
homeomorphism onto its range, we can complete the proof of
Proposition~\ref{prop-new-j-map} by proving the cross section result.

\begin{prop}
  \label{prop-cross-section}
  Given $(u_{0},\alpha_{0},v_{0})\in\Aut\A$ with $\alpha_{0}=\Ad
  V_{0}$, there is an open neighborhood $N$ of
  $(u_{0},\alpha_{0},v_{0})$ and a continuous map
  $\beta:N\to\Iso(\XX)$ such that $\uj\circ \beta=\id_{N}$ and such
  that $\beta(u_{0},\alpha_{0},v_{0})=(u_{0},V_{0},v_{0})$.
\end{prop}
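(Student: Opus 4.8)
The plan is to turn the construction recalled in Lemma~\ref{lem-prop-1.6} into a \emph{continuous} one, by choosing the auxiliary unit vectors occurring there so that they depend continuously on $(u,\alpha,v)$. Fix a unit vector $e_{0}\in\X(v_{0})$ and put $f_{0}:=V_{0}e_{0}$, also a unit vector. Then $p_{0}:=\lip A(v_{0})<e_{0},e_{0}>$ is a minimal projection in $A(v_{0})$, and since $\alpha_{0}=\Ad V_{0}$ we have $\alpha_{0}(p_{0})=\lip A(u_{0})<f_{0},f_{0}>$, so $f_{0}$ lies in the range of $\alpha_{0}(p_{0})$. Choose $\xi,\eta\in\X=\sa_{0}(\go,\XX)$ with $\xi(v_{0})=e_{0}$ and $\eta(u_{0})=f_{0}$. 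Since the norm on the continuous Hilbert bundle $\XX$ is continuous, $\xi$ and $\eta$ are nonzero near $v_{0}$ and $u_{0}$, so $e(v):=\xi(v)/\|\xi(v)\|$ is a continuous unit section of $\XX$ near $v_{0}$, and $p(v):=\lip A(v)<e(v),e(v)>$ is a continuous section of minimal projections in $\A$.

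Next, for $(u,\alpha,v)$ in a suitable open neighborhood $N$ of $(u_{0},\alpha_{0},v_{0})$ --- e.g.\ where $\xi(v)$, $\eta(u)$ and $\alpha(p(v))\cdot\eta(u)$ are all nonzero, a condition that is open by continuity of the norm on $\XX$, of the module action of $\A$ on $\XX$, and of the maps appearing in condition~(c) of Proposition~\ref{prop-auta} --- I would set
\[ f(u,\alpha,v):=\frac{\alpha(p(v))\cdot\eta(u)}{\|\alpha(p(v))\cdot\eta(u)\|}, \]
a unit vector in the range of the minimal projection $\alpha(p(v))$, with $f(u_{0},\alpha_{0},v_{0})=f_{0}$. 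Then I would define $\beta\colon N\to\Iso(\XX)$ by $\beta(u,\alpha,v):=(u,U_{(u,\alpha,v)},v)$, where
\[ U_{(u,\alpha,v)}h:=\alpha\bigl(\lip A(v)<h,e(v)>\bigr)\cdot f(u,\alpha,v),\qquad h\in\X(v). \]
Because $e(v)$ is a unit vector and $f(u,\alpha,v)$ is a unit vector in the range of $\alpha(p(v))$, the computation in the proof of Lemma~\ref{lem-prop-1.6} shows that $U_{(u,\alpha,v)}$ is a unitary $\X(v)\to\X(u)$ with $\Ad U_{(u,\alpha,v)}=\alpha$; hence $\beta$ maps into $\Iso(\XX)$ and $\uj\circ\beta=\id_{N}$. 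At the base point, using $\alpha_{0}=\Ad V_{0}$, $\lip A(v_{0})<h,e_{0}>=\theta_{h,e_{0}}$ and $\rip\C<f_{0},f_{0}>=1$, one computes $U_{(u_{0},\alpha_{0},v_{0})}h=\alpha_{0}(\theta_{h,e_{0}})\cdot f_{0}=\theta_{V_{0}h,f_{0}}(f_{0})=V_{0}h$, so $\beta(u_{0},\alpha_{0},v_{0})=(u_{0},V_{0},v_{0})$.

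It then remains to verify that $\beta$ is continuous; by Proposition~\ref{prop-iso-top} it suffices to show that $\beta$ sends a net $(u_{i},\alpha_{i},v_{i})\to(u,\alpha,v)$ in $N$ to a convergent net in $\Iso(\XX)$ with limit $\beta(u,\alpha,v)$. Conditions (a) and (b) of Proposition~\ref{prop-iso-top} hold automatically, and for (c) I would argue as follows: if $h_{i}\to h_{0}$ in $\XX$ with $q(h_{i})=v_{i}$ (so $h_{0}\in\X(v)$), then $e(v_{i})\to e(v)$ in $\XX$; by continuity of the inner product on $\XX$ (cf.\ Lemma~\ref{lem-product}), $\lip A(v_{i})<h_{i},e(v_{i})>\to\lip A(v)<h_{0},e(v)>$ and $p(v_{i})\to p(v)$ in $\A$; by condition~(c) of Proposition~\ref{prop-auta}, $\alpha_{i}\bigl(\lip A(v_{i})<h_{i},e(v_{i})>\bigr)\to\alpha\bigl(\lip A(v)<h_{0},e(v)>\bigr)$ and $\alpha_{i}(p(v_{i}))\to\alpha(p(v))$ in $\A$; and $\eta(u_{i})\to\eta(u)$ in $\XX$. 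Continuity of the module action of $\A$ on $\XX$ then gives $\alpha_{i}(p(v_{i}))\cdot\eta(u_{i})\to\alpha(p(v))\cdot\eta(u)$, which is nonzero since $(u,\alpha,v)\in N$, so the norms converge and $f(u_{i},\alpha_{i},v_{i})\to f(u,\alpha,v)$; a last application of continuity of the module action yields $U_{(u_{i},\alpha_{i},v_{i})}h_{i}\to\alpha\bigl(\lip A(v)<h_{0},e(v)>\bigr)\cdot f(u,\alpha,v)=U_{(u,\alpha,v)}h_{0}$, which is condition~(c) of Proposition~\ref{prop-iso-top}. Hence $\beta$ is continuous, and this completes the proof.

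The step I expect to be the real obstacle is not conceptual but organizational: the unit vector $f(u,\alpha,v)$ must be chosen \emph{inside} the range of $\alpha(p(v))$ for \eqref{eq:4} to define a unitary implementing $\alpha$ --- which is exactly what forces the ``project onto the range, then renormalize'' construction above --- and one must then have available the joint continuity of the inner product and of the module action on the bundles $\XX$ and $\A$, which follow from \cite{wil:crossed}*{Proposition~C.20} but need to be marshalled carefully together with condition~(c) of Proposition~\ref{prop-auta}.
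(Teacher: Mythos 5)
Your proposal is correct and takes essentially the same approach as the paper: both make the construction of Lemma~\ref{lem-prop-1.6} continuous by choosing local sections of $\XX$ through the auxiliary unit vectors, forming $f(u,\alpha,v)$ by projecting onto the range of the minimal projection $\alpha(p(v))$ and renormalizing, and then verifying continuity of $\beta$ via Proposition~\ref{prop-iso-top}, Proposition~\ref{prop-auta}(c) and Lemma~\ref{lem-product}. The only cosmetic differences are your direct choice $f_{0}=V_{0}e_{0}$ (the paper takes an arbitrary unit vector in the range of $\alpha_{0}(p_{0})$ and adjusts it by a unimodular scalar), your description of $N$ as an open nonvanishing locus rather than the explicit basic neighborhood $\W(U,a,b,V,\tfrac14)$ with its quantitative estimate, and your attribution of the convergence $\lip A(v_{i})<h_{i},e(v_{i})>\to\lip A(v)<h_{0},e(v)>$ to Lemma~\ref{lem-product} --- the paper proves that claim separately, by the same section-through-the-limit device.
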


We'll need the following lemma.
\begin{lemma}
  \label{lem-product}
  The action of $\A$ on $\XX$ is continuous in that if $a_{i}\to
  a_{0}$ in $\A$ and $h_{i}\to h_{0}$ in $\XX$ with
  $p(a_{i})=v_{i}=q(h_{i})$, then $a_{i}\cdot h_{i}\to a_{0}\cdot
  h_{0}$ in $\XX$.
\end{lemma}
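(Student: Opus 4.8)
The plan is to deduce this from \cite{wil:crossed}*{Proposition~C.20}, in the same spirit as the continuity arguments in the proofs of Lemma~\ref{lem-iff} and Lemma~\ref{lem-j-cts}. Recall that in the situation at hand $A=\K(\X)$, so $A(v)$ is identified with $\K(\X(v))$, the fibre $\X(v)$ of the continuous Hilbert bundle $\XX$ is a Hilbert space, and the left action $A\times\X\to\X$ restricts fibrewise to $A(v)$ acting on $\X(v)$ and takes continuous sections to continuous sections. We will also use, as in Lemma~\ref{lem-iff}, that the norm is upper semicontinuous on $\A$ (so that $\set{b\in\A:\|b\|<\delta}$ is open and $\|a_i\|$ is eventually bounded once $a_i\to a_0$) and continuous on $\XX$.

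Fix nets $a_i\to a_0$ in $\A$ and $h_i\to h_0$ in $\XX$ with $p(a_i)=v_i=q(h_i)$, and let $\epsilon>0$. The point that prevents an immediate argument is that $h_i$ varies with $i$ and so cannot be realized as the restriction of a single continuous section; this is circumvented by a double approximation. First, since the finite-rank operators are dense in $A(v_0)=\K(\X(v_0))$ and continuous sections of $\XX$ surject onto each fibre, I would choose $c_{1},d_{1},\dots,c_{n},d_{n}\in\X$ so that $T:=\sum_{j}\lip A<c_{j},d_{j}>\in A$ has $\|T(v_0)-a_0\|$ as small as we like; since $a_i-T(v_i)\to a_0-T(v_0)$ in $\A$ and $\set{b\in\A:\|b\|<\delta}$ is open, $\|a_i-T(v_i)\|$ is then eventually as small as we like. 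Second, applying Proposition~C.20 to the convergent net $h_i\to h_0$ produces a continuous section $e\in\X=\sa_{0}(\go,\XX)$ with $\|h_0-e(v_0)\|$ and (eventually) $\|h_i-e(v_i)\|$ as small as we like.

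Now set $f:=T\cdot e\in\X$, a fixed continuous section with $f(v)=T(v)\cdot e(v)$ fibrewise. Writing
\[\|a_i\cdot h_i-f(v_i)\|\le\|a_i\|\,\|h_i-e(v_i)\|+\|a_i-T(v_i)\|\,\|e(v_i)\|\]
and using that $\|a_i\|$ and $\|e(v_i)\|$ are eventually bounded by constants depending only on $a_0$ and $h_0$, one gets $\|a_i\cdot h_i-f(v_i)\|<\epsilon$ eventually, provided $T$ and $e$ were chosen sharply enough at the start; the same estimate at $i=0$ gives $\|a_0\cdot h_0-f(v_0)\|<\epsilon$. Since also $q(a_i\cdot h_i)=v_i\to v_0=q(a_0\cdot h_0)$, Proposition~C.20 yields $a_i\cdot h_i\to a_0\cdot h_0$ in $\XX$, as desired.

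Thus the whole proof is essentially bookkeeping around Proposition~C.20, and the main obstacle — the moving net $\set{h_i}$ — is handled by routing the argument through the single section $T\cdot e$; in particular no continuity of the fibrewise inner product on $\XX$ is needed. The only inputs beyond Proposition~C.20 are standard facts about upper semicontinuous Banach bundles — that sections surject onto fibres, that the left action $A\times\X\to\X$ preserves continuous sections, and that the norm is upper semicontinuous on $\A$ and continuous on $\XX$ — together with the density of the finite-rank operators in $A(v_0)$.
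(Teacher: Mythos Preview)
Your argument is correct and follows exactly the paper's route: compare $a_i\cdot h_i$ to the values of a single continuous section of $\XX$ and invoke \cite{wil:crossed}*{Proposition~C.20}. You work harder than necessary, though: since continuous sections of $\A$ and $\XX$ surject \emph{exactly} onto fibres, the paper simply chooses $a\in A$ and $h\in\X$ with $a(v_0)=a_0$ and $h(v_0)=h_0$, uses the section $a\cdot h$ directly, and is done in four lines with no finite-rank approximation of $a_0$ and no $\epsilon$-bookkeeping.
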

\begin{proof}
  Fix $a\in A$ and $h\in \X$ such that $a(v_{0})=a_{0}$ and
  $h(v_{0})=h_{0}$.  Since $\|a_{i}-a(v_{i})\|\to 0$ and
  $\|h_{i}-h(v_{i})\|\to 0$, it follows that $\|a_{i}\cdot
  h_{i}-a(v_{i})\cdot h(v_{i})\|\to 0$.  Since $a(v_{i})\cdot
  h(v_{i})=a\cdot h(v_{i})$, we have $a(v_{i})\cdot h(v_{i})\to
  a_{0}\cdot h_{0}$ in $\XX$.  Hence $a_{i}\cdot h_{i}\to a_{0}\cdot
  h_{0}$ by \cite{wil:crossed}*{Proposition~C.20}.
\end{proof}

\begin{proof}[Proof of Proposition~\ref{prop-cross-section}]
  We follow the constructions in Lemma~\ref{lem-prop-1.6}.  Thus if
  $e_{0}$ is a unit vector in $\X(v_{0})$ and $c_{0}\in\X(u_{0})$ is a
  unit vector in the range of $\alpha_{0}(\lip
  A(v_{0})<e_{0},e_{0}>)$, then $\alpha_{0}=\Ad V$ for $V$ defined by
  $V(h)=\alpha_{0}(\lip A(v_{0})<h,e_{0}>)c_{0}$. We can modify
  $c_{0}$ by a unimodular scalar so that $V=V_{0}$.

  Let $e\in\X$ be such that $e(v_{0})=e_{0}$.  Since $v\mapsto
  \|e(v)\|$ is continuous and $\phi\cdot e(v)=\phi(v)e(v)$ defines an
  element of $\X$ for any $\phi\in C_{c}(\go)$, we can assume that
  there is a neighborhood $V$ of $v_{0}$ such that $\|e(v)\|=1$ for
  all $v\in V$.  Similarly, choose $c\in X$ such that $c(u_{0})=c_{0}$
  such that there is a neighborhood $U$ of $u_{0}$ such that
  $\|c(u)\|=1$ for all $u\in U$.  Let $a:=\lip A<e,e>$ and $b:=\lip
  A<c,c>$ be the corresponding local rank-one projection fields.
  Then,
  \begin{equation*}
    N=\W(U,a,b,V,\frac14)=\set{(u,\alpha,v):\text{$u\in U$, $v\in V$ and
        $\|\alpha(a(v))-b(u)\|<\frac14$}}
  \end{equation*}
  is a basic open neighborhood of $(u_{0},\alpha_{0},v_{0})$ in $\Aut
  \A$ (see the proof of Proposition~\ref{prop-auta}).

  If $(u,\alpha,v)\in N$, then
  \begin{equation*}
    \|\alpha(a(v))c(u)-c(u)\|=\|\alpha(a(v))c(u)-b(u)c(u)\|\le\frac14.
  \end{equation*}
  In particular, $\alpha(a(v))c(u)\not=0$ and
  \begin{equation*}
    f(u,\alpha,v):= \|\alpha(a(v))c(u)\|^{-1} \alpha(a(v))c(u)
  \end{equation*}
  is a unit vector in the range of the minimal projection
  $\alpha(a(v))$.  Thus if $n=(u,\alpha,v)\in N$, then $\alpha=\Ad
  V_{n}$ where
  \begin{equation*}
    V_{n}(h)=\alpha(\lip A(v)<h,e(v)>)f(n).
  \end{equation*}

  Hence we can define $\beta:N\to\Iso(\XX)$ by $\beta(u,\alpha,v)=(u,
  V_{(u,\alpha,v)},v)$.  To complete the proof, we just need to see
  that $\beta$ is continuous.  Suppose that
  $n_{i}=(u_{i},\alpha_{i},v_{i})\to n=(u,\alpha,v)$.  In view of
  Proposition~\ref{prop-iso-top}, it will suffice to show that if
  $h_{i}\to h$ in $\XX$ with $q(h_{i})=v_{i}$, then
  $V_{n_{i}}(h_{i})\to V_{n}(h)$.

  But we claim
  \begin{equation*}
    \lip A(v_{i})<h_{i},e(v_{i})>\to \lip A(v)<h,e(v)>
  \end{equation*}
  in $\A$.  To see this, let $f\in \H$ be such that $f(v)=h$.  Then
  $\|f(v_{i})-h_{i}\|\to 0$.  Therefore
  \begin{equation*}
    \|\lip A(v_{i})<f(v_{i}),e(v_{i})>-\lip
    A(v_{i})<h_{i},e(v_{i})>\|\to 0.
  \end{equation*}
  Since
  \begin{equation*}
    \lip A(v_{i})<f(v_{i}),e(v_{i})>\to \lip A(v) <f(v),e(v)>=\lip
    A(v)<h,e(v)>, 
  \end{equation*}
  the claim follows from \cite{wil:crossed}*{Proposition~C.20}.  Hence
  it follows from Proposition~\ref{prop-auta} that
  \begin{equation*}
    \alpha_{i}(\lip A(v_{i})<f(v_{i}),e(v_{i})>)\to \alpha(\lip
    A(v)<h,e(v)>).
  \end{equation*}
  Since $f(n_{i})\to f(n)$ in $\A$, Lemma~\ref{lem-product} implies
  that
  \begin{equation*}
    V_{n_{i}}(h_{i}) = \alpha_{i}\bigl(\lip
    A(v_{i})<h_{i},e(v_{i})>\bigr) f(n_{i}) \to \alpha\bigl(\lip
    A(v)<h,e(v)>\bigr) f(n) = V_{n}(h).
  \end{equation*}
  This completes the proof of Proposition~\ref{prop-cross-section} and
  also of Proposition~\ref{prop-new-j-map}.
\end{proof}

%%%%%%%%%%%%%%%%%%%%%%%%%%%%%%%%%%%%%%%%%%%%%%%%%%%%
%%%% End Matter
%%%%%%%%%%%%%%%%%%%%%%%%%%%%%%%%%%%%%%%%%%%%%%%%%%%%%

\begin{acknowledgements}
  The first author was partially supported by NSF grant
  DMS-1100570, and the second author by a grant from the Simons
  Foundation.
\end{acknowledgements}

% \bibliographystyle{amsxport}
% %
% \bibliography{references-nov01}

\def\noopsort#1{}\def\cprime{$'$} \def\sp{^}
% \bib, bibdiv, biblist are defined by the amsrefs package.
\begin{bibdiv}
\begin{biblist}

\bib{fd:representations1}{book}{
      author={Fell, James M.~G.},
      author={Doran, Robert~S.},
       title={Representations of {$*$}-algebras, locally compact groups, and
  {B}anach {$*$}-algebraic bundles. {V}ol. 1},
      series={Pure and Applied Mathematics},
   publisher={Academic Press Inc.},
     address={Boston, MA},
        date={1988},
      volume={125},
        ISBN={0-12-252721-6},
        note={Basic representation theory of groups and algebras},
      review={\MR{90c:46001}},
}

\bib{gre:am78}{article}{
      author={Green, Philip},
       title={The local structure of twisted covariance algebras},
        date={1978},
     journal={Acta Math.},
      volume={140},
       pages={191\ndash 250},
}

\bib{kel:general}{book}{
      author={Kelley, John~L.},
       title={General topology},
   publisher={Van Nostrand},
     address={New York},
        date={1955},
}

\bib{kum:cjm86}{article}{
      author={Kumjian, Alexander},
       title={On {\cs}-diagonals},
        date={1986},
     journal={Canad. J. Math.},
      volume={38},
       pages={969\ndash 1008},
}

\bib{kmrw:ajm98}{article}{
      author={Kumjian, Alexander},
      author={Muhly, Paul~S.},
      author={Renault, Jean~N.},
      author={Williams, Dana~P.},
       title={The {B}rauer group of a locally compact groupoid},
        date={1998},
        ISSN={0002-9327},
     journal={Amer. J. Math.},
      volume={120},
      number={5},
       pages={901\ndash 954},
      review={\MR{2000b:46122}},
}

\bib{muh:cm01}{incollection}{
      author={Muhly, Paul~S.},
       title={Bundles over groupoids},
        date={2001},
   booktitle={Groupoids in analysis, geometry, and physics ({B}oulder, {CO},
  1999)},
      series={Contemp. Math.},
      volume={282},
   publisher={Amer. Math. Soc.},
     address={Providence, RI},
       pages={67\ndash 82},
      review={\MR{MR1855243 (2003a:46085)}},
}

\bib{muhwil:ms92}{article}{
      author={Muhly, Paul~S.},
      author={Williams, Dana~P.},
       title={Continuous trace groupoid {\cs}-algebras. {II}},
        date={1992},
     journal={Math. Scand.},
      volume={70},
       pages={127\ndash 145},
}

\bib{muhwil:plms395}{article}{
      author={Muhly, Paul~S.},
      author={Williams, Dana~P.},
       title={Groupoid cohomology and the {D}ixmier-{D}ouady class},
        date={1995},
     journal={Proc. London Math. Soc. (3)},
       pages={109\ndash 134},
}

\bib{muhwil:jams04}{article}{
      author={Muhly, Paul~S.},
      author={Williams, Dana~P.},
       title={The {D}ixmier-{D}ouady class of groupoid crossed products},
        date={2004},
        ISSN={1446-7887},
     journal={J. Aust. Math. Soc.},
      volume={76},
      number={2},
       pages={223\ndash 234},
      review={\MR{MR2041246 (2005e:46128)}},
}

\bib{muhwil:dm08}{article}{
      author={Muhly, Paul~S.},
      author={Williams, Dana~P.},
       title={Equivalence and disintegration theorems for {F}ell bundles and
  their {$C\sp *$}-algebras},
        date={2008},
        ISSN={0012-3862},
     journal={Dissertationes Math. (Rozprawy Mat.)},
      volume={456},
       pages={1\ndash 57},
      review={\MR{MR2446021}},
}

\bib{muhwil:nyjm08}{book}{
      author={Muhly, Paul~S.},
      author={Williams, Dana~P.},
       title={Renault's equivalence theorem for groupoid crossed products},
      series={NYJM Monographs},
   publisher={State University of New York University at Albany},
     address={Albany, NY},
        date={2008},
      volume={3},
        note={Available at http://nyjm.albany.edu:8000/m/2008/3.htm},
}

\bib{raewil:tams85}{article}{
      author={Raeburn, Iain},
      author={Williams, Dana~P.},
       title={Pull-backs of {$C\sp \ast$}-algebras and crossed products by
  certain diagonal actions},
        date={1985},
        ISSN={0002-9947},
     journal={Trans. Amer. Math. Soc.},
      volume={287},
      number={2},
       pages={755\ndash 777},
      review={\MR{86m:46054}},
}

\bib{rw:morita}{book}{
      author={Raeburn, Iain},
      author={Williams, Dana~P.},
       title={Morita equivalence and continuous-trace {$C^*$}-algebras},
      series={Mathematical Surveys and Monographs},
   publisher={American Mathematical Society},
     address={Providence, RI},
        date={1998},
      volume={60},
        ISBN={0-8218-0860-5},
      review={\MR{2000c:46108}},
}

\bib{simwil:nyjm13}{article}{
      author={Sims, Aidan},
      author={Williams, Dana~P.},
       title={An equivalence theorem for reduced {F}ell bundle
  {$C^*$}-algebras},
        date={2013},
     journal={New York J. Math.},
      volume={19},
       pages={159\ndash 178},
}

\bib{txlg:acens12}{article}{
      author={Tu, Jean-Louis},
      author={Xu, Ping},
      author={Laurent-Gengoux, Camille},
       title={Twisted {$K$}-theory of differentiable stacks},
        date={2004},
        ISSN={0012-9593},
     journal={Ann. Sci. \'Ecole Norm. Sup. (4)},
      volume={37},
      number={6},
       pages={841\ndash 910},
         url={http://dx.doi.org/10.1016/j.ansens.2004.10.002},
      review={\MR{2119241 (2005k:58037)}},
}

\bib{wil:crossed}{book}{
      author={Williams, Dana~P.},
       title={Crossed products of {$C{\sp \ast}$}-algebras},
      series={Mathematical Surveys and Monographs},
   publisher={American Mathematical Society},
     address={Providence, RI},
        date={2007},
      volume={134},
        ISBN={978-0-8218-4242-3; 0-8218-4242-0},
      review={\MR{MR2288954 (2007m:46003)}},
}

\end{biblist}
\end{bibdiv}

\end{document}